\pdfminorversion=5
\documentclass{article}
\usepackage{graphicx}
\usepackage{epstopdf}
\usepackage{amsmath}
\usepackage{amssymb}
\usepackage{bm}
\usepackage{bbm}
\usepackage{enumerate}
\usepackage[T1]{fontenc}
\usepackage{hyperref}
\usepackage{color}
\usepackage{curves}
\usepackage{tikz}
\usepackage[center]{caption}
\usepackage{ntheorem}
\usepackage{float}
\usepackage{mathrsfs}
\usepackage{wrapfig}

\usepackage{todonotes}
\usepackage[normalem]{ulem}

\numberwithin{figure}{section}
\numberwithin{equation}{section}

\title{Strictly weak consensus in the\\ uniform compass model on $\Z$}
\author{Nina Gantert
	\\\normalsize Technische Universit\"at M\"unchen\\
	Markus Heydenreich
	\\\normalsize Ludwig-Maximilians-Universit\"at M\"unchen\\
	Timo Hirscher\thanks{Research supported by grants from the Swedish Research Council and the
	Royal Swedish Academy of Sciences.}
\\\normalsize Stockholms Universitet}

\theoremstyle{break}
\newtheorem{theorem}{Theorem}[section]
\newtheorem{corollary}{Corollary}[section]
\newtheorem{lemma}{Lemma}[section]
\newtheorem{proposition}{Proposition}[section]
\newtheorem{definition}{Definition}
\newtheorem{remark}{Remark}[section]
\newtheorem{example}{Example}[section]
\makeatletter
\let\c@proposition\c@theorem
\let\c@lemma\c@theorem
\let\c@example\c@theorem
\let\c@corollary\c@theorem
\let\c@remark\c@theorem
\makeatother

\newenvironment{proof}{\noindent{\emph{Proof. }}}{\vspace{-0.5cm}~\hfill $\square$\vspace{0.5cm}}
\newenvironment{nproof}[1]{\noindent{\emph{Proof #1.}}}{\vspace{-1em}~\hfill $\square$\vspace{2em}}

\newcommand\N{\mathbb{N}}
\newcommand\R{\mathbb{R}}
\newcommand\Z{\mathbb{Z}}
\renewcommand\S{\mathcal{S}}
\newcommand\E{\mathbb{E}\,}
\newcommand\Prob{\mathbb{P}}
\renewcommand\epsilon{\varepsilon}
\renewcommand\phi{\varphi}

\definecolor{darkblue}{rgb}{0,0,.5}
\definecolor{darkgreen}{rgb}{0,.6,.3}
\hypersetup{colorlinks=true, breaklinks=true, linkcolor=blue, 
citecolor=darkblue, menucolor=blue, urlcolor=blue}




\begin{document}
\newpage
\maketitle
\begin{abstract}
We investigate a model for opinion dynamics, where individuals (modeled by vertices of a graph) hold certain abstract opinions. As time progresses, neighboring individuals interact with each other, and this interaction results in a realignment of opinions closer towards each other. This mechanism triggers formation of consensus among the individuals. Our main focus is on \emph{strong consensus} (i.e.\ global agreement of all individuals) versus \emph{weak consensus} (i.e.\ local agreement among neighbors). By extending a known model to a more
general opinion space, which lacks a ``central'' opinion acting as a contraction point, we provide an example of an opinion formation process on the 
one-dimensional lattice 
with weak consensus but no strong consensus.
\end{abstract}



\section{Introduction}\label{intro}
\paragraph{Background.}
	A major theme of statistical physics is to derive macroscopic properties of a system from simple interactions at the microscopic level.
	A prime example is the well-known Ising model, where the strength of mutual influence of neighboring magnetic dipoles depends on
	the temperature. While at high temperature the state is incoherent and chaotic so that the mean magnetization is 0,
	at low temperature, the spins align collectively and form a macroscopic magnet.
	
	Transitions from individual to collective behavior, as observed in interacting particle systems like the Ising model, attracted the attention
	of social sciences. Despite being overly simplistic, an abundance of similar but qualitatively different interacting particle systems (such
	as the voter model or the majority rule model or the contact process) were introduced in order to describe and explain group behavior
	and swarm phenomena, which can be observed in real life. A broad overview of models, which fall into the research field commonly
	known as {\itshape opinion dynamics}, can be found in the survey article ``{\em Statistical physics of social dynamics}'' by Castellano
	et al.\ \cite{surv}. For more mathematical background of models in this spirit we refer to Liggett's monograph \cite{Liggett}.

The model which Deffuant et al.\ \cite{Model} introduced almost 20 years ago is a simple representative
of the so-called {\em bounded confidence models}: An opinion is represented by a real number and neighboring
agents update their opinions in pairwise interactions towards a compromise, but only if the opinions
with which they enter the interaction do not differ by more than a given threshold. This is supposed to
shape the phenomenon that humans in general are inclined to modify their opinion on a specific topic
when confronted with arguments differing from their own belief, but openness of mind is lost if a priori
the opinions are differing too much. 

A rigorous and comprehensive mathematical understanding of bounded confidence models such as
the one introduced by Deffuant et al.\ on infinite graphs (in particular on grids of dimension greater than 1)
is still lacking. 

\paragraph{A mathematical model for opinion dynamics.} 

We now describe the Deffuant model as a continuous-time Markov process. To this end, we consider a connected and locally finite graph $G=(V,E)$, where the vertices are interpreted as \emph{individuals} or \emph{agents}. Our graphs will always be undirected and two individuals interact whenever they are linked by an edge.
We further denote by $\mathcal{S}$ a compact and convex space of \emph{opinions} with metric $d$, and the state space of the Markov process is given by $\Omega=\mathcal S^{V}$ (equipped with the product topology). 
For given parameters  $\mu\in(0,\tfrac 12]$ and $\theta>0$, the dynamics of the process is described by the probability generator 
\begin{equation}\label{eqDefLf}
	\mathcal Lf(\eta)=\sum_{e\in E}\left(f(A_e\eta)-f(\eta)\right), \qquad \eta\in\Omega,
\end{equation}
where $f$ is a continuous test function, and the operator $A_e$ for the edge $e=\langle u,v\rangle$ acts on $\eta\in\Omega$ as 
\begin{equation}\label{eqDefAe}
	A_e\eta(v)=
	\begin{cases} 
		\eta(v)&\text{if }v\not\in e;\\ 
		\eta(v)+\mu\,\mathbbm{1}_{\{d(\eta(v),\eta(u))\le \theta\}}\big(\eta(u)-\eta(v)\big)
		&\text{if } e=\langle u,v\rangle.
	\end{cases}
\end{equation}
Mind that, given $\eta\in\Omega$, the convexity of $\mathcal{S}$ implies $A_e\eta\in\Omega$ for all $e\in E$. 
Existence and uniqueness of a Feller process having $\mathcal L$ as its generator is standard, cf.\ Chapter IX in \cite{Liggett}.   

The dynamics defined in \eqref{eqDefLf} and \eqref{eqDefAe} can best be explained via the graphical construction: On every edge $e$ there is an independent Poisson clock. Upon clock rings, the two incident individuals interact, and the result of the interaction is as follows: if the opinions differ by at most $\theta$, then the two individuals alter their opinions and move both a proportion $\mu$ closer towards each other (in the extreme case $\mu=1/2$, they even agree on the average opinion). If, however, the opinions differ by more than the ``confidence bound'' $\theta$, then there is no change. 

One of the main questions related to this model is the following: Given an initial distribution, will the opinions of different individuals align as $t\to\infty$ (we call this \emph{consensus}) or not? Our prime example for $G=(V,E)$ is the \emph{two-sided infinite path} with $V=\Z$ and $E=\{\langle v,v+1\rangle, v\in\Z\}$. 

\paragraph{Previous work.}
In 2011, Lanchier \cite{Lanchier} was the first to publish a result about the standard Deffuant model on $\Z$:
For i.i.d.\ initial opinions that are uniform on $[0,1]$, he proved that there is
a sharp phase transition at $\theta=\frac12$, from almost sure no consensus in
the subcritical regime ($\theta<1/2$) to almost sure consensus in the supercritical regime ($\theta>1/2$), irrespectively of the value of $\mu$.
In the same year, using different techniques, H\"aggstr\"om \cite{ShareDrink} reproved and slightly sharpened Lanchier's result: He showed in
addition to it that in the supercritical regime, the almost sure consensus is not only local (i.e.\ between
neighbors, cf.\ {\em weak} in Definition \ref{states}) but global (corresponding to {\em strong} in Definition
\ref{states}) with $\frac12$ as deterministic limit for each individual opinion.
Later these results were extended beyond the uniform distribution on $[0,1]$ for the initial opinions, first to general univariate distributions by H\"aggstr\"om and Hirscher \cite{Deffuant}, then to vector-valued \cite{multidim} and measure-valued opinions \cite{measure} by Hirscher.

In addition to the Euclidean norm, other measures of distance of two opinions were proposed and
analyzed, however, the underlying opinion space $\mathcal{S}$ considered was always convex: $\R^n$ for some
$n\in\N$ in the finite-dimensional case and the set of probability densities on $[0,1]$
in the measure-valued case.

\paragraph{The compass model.}
	In contrast to the standard Deffuant model and its generalizations described above, we want to consider opinion spaces
	that are not necessarily convex but only path-connected, and shall see that this modification can change the limiting
    behavior fundamentally; further, for simplicity, we set $\theta=\infty$, in other words ignore the ``confidence bound'' modeled by the
    parameter $\theta$). This modification is motivated by extensions of the Ising model to more general state spaces, e.g.\ the unit circle,
    see the book \cite{FriedVelen17}. We will come back to related models and give more details towards the end of the introduction.

    For a non-convex opinion space $\mathcal{S}$, the interaction rule laid down in	\eqref{eqDefAe} has to be adapted so that
    updates do not lead out of $\mathcal{S}$. The arguably most natural way to achieve
   	this is to measure distance between two opinions as the length of their geodesic (with respect to a metric $d$ on $\mathcal S$), along which
   	compromising agents then align their opinions, cf.\ Figure \ref{notconvex}. In case the geodesic is not unique, the selection is randomized.
		\begin{figure}[tbh]
			\centering
			\includegraphics[scale=1.2]{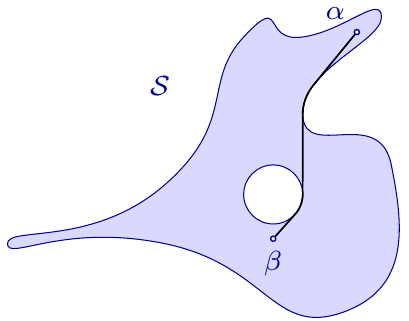}
			\caption{For non-convex $\S$, the opinions of interacting agents move towards
				each other along the geodesic between them.\label{notconvex}}
		\end{figure}
	
    Our choice for $\S$ will be the unit circle $S^1$, so that updates do not make opinions approach
	the center, but happen along their geodesic in $\S$, i.e.\ the circle arc. This change turns out to be crucial and
	the essential difference to opinion spaces considered earlier is the following:
	Given Euclidean geometry, for convex $\mathcal{S}\subseteq \R^n$ there always exists a reference point $s\in \S$
	such that 
	\begin{enumerate}[(i)]
		\item the sum of distances to $s$ of two interacting opinions can not increase through the update and
		\item $\E[d(\eta_v(t),s)]$ decreases {\em strictly} with $t$ (provided $0<\E[d(\eta_v(0),s)]<\infty$).
	\end{enumerate}

	Note at this point that convexity is a sufficient, not a necessary condition for symmetric approaches along geodesics to be contracting
    in the above sense: Also a star and the so-called {\em Lituus spiral}, given by $r(\phi)=\frac{1}{\phi^2}$ (see Figure \ref{notconvex2}),
    with respect to their centers and Euclidean geometry have this property (where in the case of the spiral, the center is not even part
    of the opinion space).

	\begin{figure}[bht]
		\centering
		\includegraphics[scale=0.9]{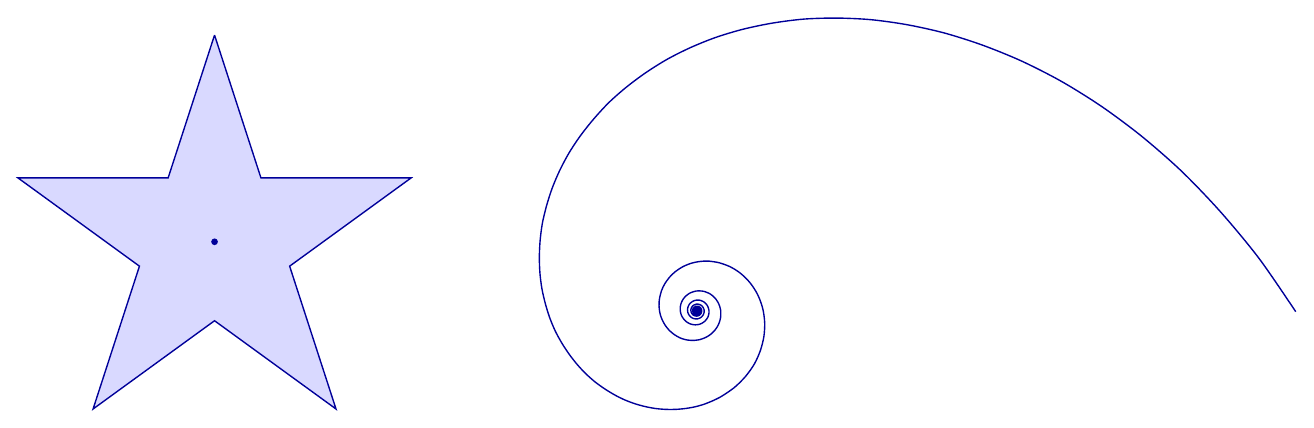}
		\caption{Both on a star and a spiral with suitable curvature, updates along geodesics are contracting towards the center,
			e.g.\ for uniform initial marginals.\label{notconvex2}}
	\end{figure}
	On $\mathcal{S}=S^1$, however, the dynamics does not have this two-part contraction property: while the (almost sure)
	weak contraction condition (i) fails for all points but the origin (i.e.\ the circle center), the strict one (ii) fails for the center with
	respect to any initial distribution.
    We will parametrize $\mathcal{S}=S^1$ via the quotient space
	$\left.\raisebox{.2em}{$\R$}\middle/ \raisebox{-.2em}{$2\Z$}\right.$, i.e.
	$$\mathcal{S}=\big\{[x];\;-1<x\leq 1\},\quad\text{where } [x]=\{y\in\R;\;\tfrac{y-x}{2}\in\Z\},$$
	and define on it the canonical metric $d([x],[y])=\min\big\{|a-b|;\; a\in[x],\ b\in[y]\big\}$.
Since elements in $\mathcal{S}$ have an interpretation as \emph{direction} (cf.\ Figure \ref{compasspic}), we propose to call our model
the \emph{compass model}. 

For ease of notation, we simply write $\mathcal{S}=(-1,1]$ instead of using the more accurate representation by equivalence classes and
write $x\ (\mathrm{mod}\ \mathcal{S})$ to refer to the unique representative of $[x]$ in $(-1,1]$.
Note that $d$ is indeed a metric and coincides with the length of the Euclidean shortest path, if distances are taken along the circle arc
(rescaled such that the total perimeter is 2).
More precisely,
$$d\colon\mathcal{S}\times \mathcal{S}\to[0,1], \qquad
                            (x,y) \mapsto \min\{|x-y|,2-|x-y|\}.$$

As indicated above, we change the dynamics to happen along geodesics in $\S$. To this end, we consider the Markov process with
a generator similar as in \eqref{eqDefLf}, namely  
\begin{equation}\label{eqDefLf2}
	\mathcal Lf(\eta)=\sum_{e\in E}\,\Big(\tfrac12\big[f(A^{(1)}_e\eta)+f(A^{(2)}_e\eta)\big]-f(\eta)\Big), \qquad \eta\in\Omega,
\end{equation}
with 
\begin{equation}\label{eqDefAe2}
	A^{(k)}_e\eta(v)=
	\begin{cases} 
		\eta(v)
		&\hskip-10em\text{if }v\not\in e;\\
		\eta(v)+\mu\big(\eta(u)-\eta(v)\big)
		&\hskip-10em\text{if }e=\langle u,v\rangle,\ |\eta(u)-\eta(v)|<1;\\
		\eta(v)+\mu\big(2-|\eta(u)-\eta(v)|\big)\, \mathrm{sgn}(\eta(v))
		\pmod{\mathcal{S}}\\
		&\hskip-10em\text{if } e=\langle u,v\rangle,\ |\eta(u)-\eta(v)|>1;\\
		\eta(v)+(-1)^{k}\mu\, \mathrm{sgn}(\eta(v))\pmod{\mathcal{S}}\\
		&\hskip-10em\text{if } e=\langle u,v\rangle,\ |\eta(u)-\eta(v)|=1,\\
	\end{cases}
\end{equation}
for $k\in\{1,2\}$, where $\mathrm{sgn}(x)=\mathbbm{1}_{\{x>0\}}-\mathbbm{1}_{\{x<0\}}$ is the sign function.\vspace{1em}

In contrast to \eqref{eqDefLf}, the jump part in \eqref{eqDefLf2} is split up into two contributions, $A_e^{(1)}$ and $A_e^{(2)}$.
This is necessary to implement a (uniformly) random choice of geodesic in the case when $|\eta(u)-\eta(v)|=1$ (i.e.\ when the two
interacting opinions are diametrically opposed). In this way, a rotational symmetry on the opinion space is preserved by the dynamics
as it does not depend on the parametrization of $\mathcal{S}$. Note, however, that for absolutely continuous initial
distributions, diametrically opposed opinions will a.s.\ not occur.

Informally, there are independent Poisson clocks on all edges. Whenever the clock on the edge $\langle u,v \rangle$ rings, the 
opinions at $u$ and $v$ jump closer to each other, see Figure \ref{compasspic}.  The parameter $\mu$ determines how much 
they are approaching each other; in the extreme case $\mu=\frac12$, an update on $\langle u,v \rangle$ results in $\eta(u)=\eta(v)$ after the jump. 



\begin{figure}[tbh]
	\centering
	\includegraphics[scale=0.9]{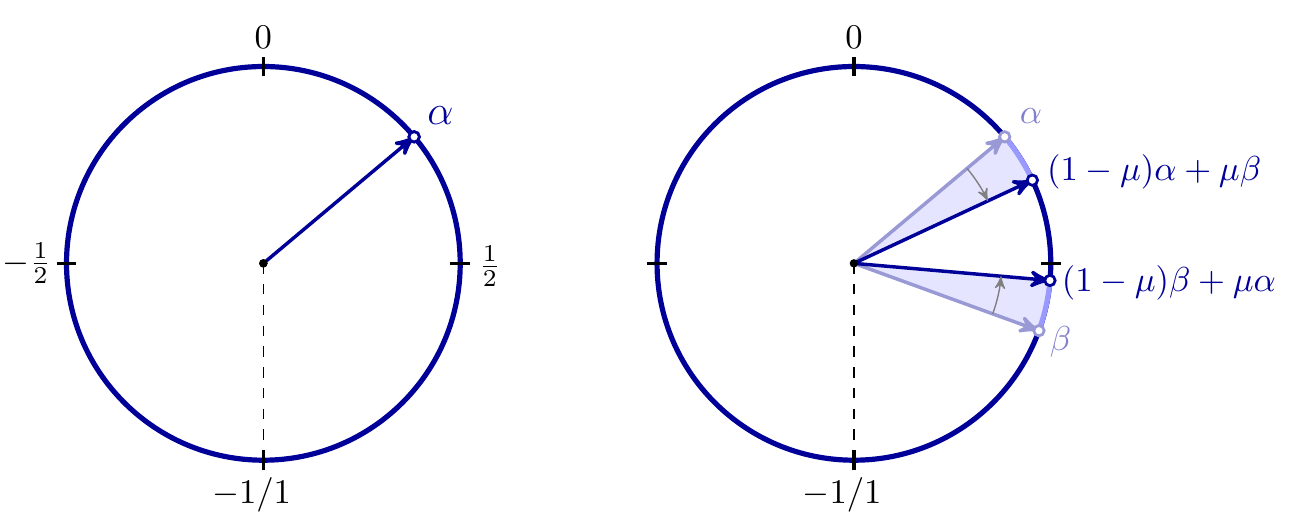}  
	\caption{On the left a visualization of the opinion space and a single opinion (or direction, represented by an
		``angle'' $\alpha$), on the right the effect of an update of two neighboring opinions $\alpha$ and $\beta$.
		\label{compasspic}}
\end{figure}

	The opinion formation model with the unit circle as opinion space, i.i.d.\ $\mathrm{unif}(\mathcal{S})$ initial opinions
	and dynamics with respect to the distance measure $d$, as described in \eqref{eqDefLf2} and \eqref{eqDefAe2}, will be
	referred to as the {\em uniform compass model}.
	
\paragraph{Our results.}
Our main focus is on the long-time behavior of the compass model: Will opinions of neighboring individuals align (`weak consensus')? Will there be global agreement on one direction (`strong consensus')? Our main results answer this question for the uniform compass model on $\mathbb Z$, see Theorem \ref{main}. For the compass model on $\mathbb Z ^n$, $n \geq 2$, we only have a partial answer, see part (c) of Remark \ref{rmk2}.
We start by formalizing these notions. 
\begin{definition}\label{states}
	We distinguish the following three asymptotic regimes:
\begin{enumerate}[(i)]
   \item {\itshape No consensus}\\
   There exist $\epsilon>0$ and two neighbors $\langle u,v\rangle$, s.t.\ for all $t_0\geq0$ there exists $t>t_0$ with
   \begin{equation}
   	d\big(\eta_t(u),\eta_t(v)\big)\geq\epsilon.
   \end{equation}
   \item {\itshape Weak consensus}\\
	Every pair of neighbors $\langle u,v\rangle$ will finally concur, i.e.\ for all $e=\langle u,v\rangle\in E$
	\begin{equation}\label{eqDefWeakConsensus}
		d\big(\eta_t(u),\eta_t(v)\big)\to 0,\text{ as } t\to\infty.
	\end{equation}
   \item {\itshape Strong consensus}\\
	The value at every vertex converges to a common (possibly random) limit $L$, i.e.\
	for all $v\in V$
	\begin{equation}\label{eqDefStrongConsensus}
		d\big(\eta_t(v),L\big)\to 0,\text{ as } t\to\infty.
	\end{equation}
\end{enumerate}
In cases (ii) and (iii), we speak of almost sure consensus / consensus in mean  / consensus in probability
whenever the convergence in \eqref{eqDefWeakConsensus} and \eqref{eqDefStrongConsensus} is almost
surely / in $\mathcal L^1$ / in probability.
\end{definition}

It is a simple exercise to show that on \emph{finite} graphs weak consensus directly implies strong consensus (making both equivalent). 
This, however, is not necessarily true on infinite graphs. 

For the Deffuant model described earlier, only two scenarios have been observed so far: either there is almost
sure strong consensus, or a.s.\ no consensus. We prove that for the compass model, the situation is quite
different, and fairly delicate: 

\begin{theorem}\label{main}
For the compass model on $\Z$ with i.i.d.\ uniform initial distribution, there is weak consensus in mean, but no
strong consensus in probability.
\end{theorem}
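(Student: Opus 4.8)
For the positive part (weak consensus in mean), the natural approach is a conservation-plus-fluctuation argument. First I would identify a quantity that is conserved in expectation or is a (super)martingale; since on the circle there is no "central" contraction point, the right object is probably a \emph{local} disagreement measure, e.g. $\E\,d(\eta_t(v),\eta_t(v+1))$ or a suitable energy $\sum$-type functional restricted to a window. The key observation should be that each update on edge $e$ strictly decreases the arclength gap on $e$ (by a factor $1-2\mu$ when $\mu<\tfrac12$, or to $0$ when $\mu=\tfrac12$) while only mildly perturbing the two neighboring edges, so that a Lyapunov/telescoping computation shows the expected total "variation" of the configuration along $\Z$ is non-increasing and the increments are square-summable in time. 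From this, a Borel--Cantelli or direct $\mathcal L^1$ estimate gives $\E\,d(\eta_t(v),\eta_t(v+1))\to 0$. One subtlety: because the opinion space is $S^1$, "disagreement" has to be measured as an angle in $(-1,1]$ and one must check that the wrap-around case $|\eta(u)-\eta(v)|>1$ (motion the "short way" around) still contracts the metric $d$ — this follows from the explicit formula in \eqref{eqDefAe2}, but it needs to be stated cleanly, perhaps via a coupling with the lifted dynamics on $\R$.

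**For the negative part (no strong consensus in probability), the heart of the matter is a topological obstruction: a winding number.** The idea is that the lift of $(\eta_t(v))_{v\in\Z}$ to $\R$ has, on any finite interval $[-n,n]$, a well-defined increment $W_n(t) = \tilde\eta_t(n) - \tilde\eta_t(-n)$ once neighboring gaps are all $<1$; more robustly, one tracks a \emph{conserved} winding-type quantity. The plan is to show that with probability bounded away from $0$ (indeed, by rotational symmetry and i.i.d.\ uniformity, with probability close to $1$ for large windows), the initial configuration on a long block "wraps around" the circle, i.e.\ the lifted opinions traverse a net angle of magnitude $\ge 2$ (a full turn) across the block, and that this winding cannot be undone by the local dynamics without passing through a diametric-opposition event. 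Concretely, I would argue that if strong consensus in probability held, then for large $t$ all opinions in a fixed large block would be within, say, distance $\tfrac14$ of a single point with high probability, which forces the lifted increment across the block to be small — contradicting the persistence of the large initial winding. So the steps are: (1) define the winding number of a finite block and show it is a.s.\ locally constant in $t$ except at the (measure-zero, for a.c.\ marginals) diametric-opposition times, and in fact changes only in a controlled way; (2) show the initial winding of $[-n,n]$ is of order $\sqrt n$ in distribution (a CLT for the sum of i.i.d.\ centered increments on the circle, with the caveat that one must make sense of "increment" — this is where the randomized geodesic choice / the $\mu=\tfrac12$ versus $\mu<\tfrac12$ distinction and the unbounded-variation issue bite); (3) conclude that the probability that the block is $\epsilon$-concentrated tends to $0$.

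**The main obstacle I anticipate is controlling the winding number dynamically.** Unlike energy, the winding number is not obviously monotone, and updates in the wrap-around regime ($|\eta(u)-\eta(v)|>1$) are exactly the ones that can change it; one must show these are rare enough (or that their effect averages out) that a macroscopic winding survives forever with positive probability. I would handle this by a monotonicity or "no-crossing" argument: coupling the circle-valued dynamics with the real-valued Deffuant dynamics on the universal cover and showing the lift is order-preserving between neighbors as long as gaps stay below $1$, so that the winding across a block can only change when some gap reaches $1$, an event one controls via the weak-consensus estimate from the first part (gaps shrink, so once a block has entered the "small-gap" regime it stays there and its winding is frozen). Combining: weak consensus forces the block into frozen-winding mode, the frozen winding is macroscopic with non-vanishing probability, hence not all vertices can converge to a common limit. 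A secondary technical nuisance is that individual opinion trajectories may have unbounded variation (as flagged in the introduction's point (ii)), so one cannot naively define a limiting winding per se; the fix is to work with the \emph{relative} configuration $\eta_t(v+1)-\eta_t(v)$ rather than absolute positions throughout, and to phrase everything in terms of $\mathcal L^1$-control of relative gaps.
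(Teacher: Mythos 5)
Both halves of your plan contain genuine gaps, and the second half is in fact undermined by the first.

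\textbf{The negative part.} Your winding-number obstruction is self-defeating: weak consensus in mean (which you are proving in the first half) already gives $\E\sum_{v=-n}^{n-1}\big|\Delta_t(e_v)\big|\le 2n\,\E\big|\Delta_t(e_0)\big|\to 0$ for any \emph{fixed} block, so by Markov's inequality the probability that the block carries any nonzero winding tends to $0$. The initial macroscopic winding does \emph{not} survive, and there is no persistent topological obstruction at the level of a fixed window. The ``freezing'' step is also unjustified: only $\mathcal{L}^1$-convergence of each gap is available, not almost sure convergence (the paper explicitly conjectures that a.s.\ weak consensus \emph{fails}, in analogy with the voter-model interface returning to every edge infinitely often), so you cannot conclude that a block permanently enters a small-gap regime with its winding frozen. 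The paper's argument (Proposition \ref{notstrong}) is much softer and avoids topology entirely: if a common limit $L$ existed, rotational symmetry of the uniform initial law would force $\Prob(L\in(0,1])=\tfrac12$, while this event is invariant under spatial shifts and hence has probability $0$ or $1$ by ergodicity --- a contradiction; convergence in probability is reduced to the almost sure case by a diagonal subsequence extraction.

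\textbf{The positive part.} Your Lyapunov argument stalls as stated. The local inequality \eqref{ineq} only gives that the windowed sum $\sum_e\big|\Delta_t(e)\big|$ is non-increasing; it is \emph{not} strictly decreasing per update. For instance, if all edge differences equal $c>0$ and no wrap-around occurs, an update on $e_v$ sends $(c,c,c)\mapsto\big(c+\mu c,\,(1-2\mu)c,\,c+\mu c\big)$ and the sum is exactly conserved. Hence monotonicity alone permits $\E\big|\Delta_t(e)\big|$ to converge to a positive limit, and ``square-summable increments plus Borel--Cantelli'' has no basis. The mechanism that actually forces decay to zero is precisely the wrap-around you relegate to a subtlety: if $\E\big|\Delta_t(e)\big|\ge\epsilon$ for all $t$, then blocks of length $K\approx 6/\epsilon$ carry total absolute difference at least $3$ with probability at least $\epsilon/2$, and the paper shows (Corollary \ref{cor}) that with a time-uniform probability $p>0$ such a block is flattened to total difference at most $5/2$ during a unit time interval while isolated from its complement; this is only possible if the modulo reduction destroys at least $1/2$ of accumulated difference. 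A spatial ergodic average then converts the density of such blocks into a uniform strictly positive decrease of $\E\big|\Delta_t(e)\big|$ per unit time, giving the contradiction. Without identifying and quantifying this wrap-around loss, your first half does not close --- and note that the same loss is what annihilates the winding you hoped to preserve in the second half.
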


We show that the opinions will not converge to one common value (Proposition \ref{notstrong}), although the
pairwise differences of neighboring opinions converge to $0$ in $\mathcal{L}^1$ (Proposition \ref{weak}). Mind that if
there is no strong consensus in probability, there cannot be strong consensus in mean or almost surely.
Further, these results imply that the probability of an individual opinion in the compass model on $\Z$ to converge equals $0$
(Corollary \ref{nolim}).

In Theorem \ref{main}, we start the process from an i.i.d.\ initial configuration (but the independence is lost
immediately). We believe our results to be true for more general initial distributions (cf.\ Remark \ref{rmk1}). On the other hand, they
cannot be true for \emph{all} initial distributions, as there are multiple invariant measures. Indeed, our second
result gives a complete characterization of the invariant measures. To this end, let $\mathcal I$ denote the
set of invariant measures for the generator \eqref{eqDefLf2}. 
Furthermore, for $s\in\mathcal{S}$, denote by $\bar s$ the configuration which assigns the value $s$ to all
vertices, and let $\delta_{\bar s}$ denote the $\delta$-measure which assigns mass 1 to $\bar s$ and 0 to all
other configurations.

\begin{theorem}\label{thm-invariant}
The set $\mathcal I$ of invariant measures for the compass model is given by the convex hull of the set 
\[\big\{\delta_{\bar s};\;  s\in\mathcal{S}\big\}.\]
\end{theorem}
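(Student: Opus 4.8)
\emph{Proof proposal.} The easy inclusion is this: for every $s\in\mathcal S$ the constant configuration $\bar s$ is left fixed by all operators $A^{(k)}_e$ (two equal opinions do not move under \eqref{eqDefAe2}), so $\delta_{\bar s}\in\mathcal I$; since $\mathcal I$ is convex and, by the Feller property, weak-$*$ closed, and since $\mathcal S$ is compact with $s\mapsto\delta_{\bar s}$ continuous, every mixture $\int_{\mathcal S}\delta_{\bar s}\,\nu(\mathrm ds)$ — that is, the whole closed convex hull of $\{\delta_{\bar s};\,s\in\mathcal S\}$ — belongs to $\mathcal I$. For the converse it suffices to show that every $\pi\in\mathcal I$ is concentrated on the constant configurations, for then $\pi=\int_{\mathcal S}\delta_{\bar s}\,\nu(\mathrm ds)$ with $\nu$ the $\pi$-law of the common opinion. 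As $\Z$ is connected, this reduces to proving $c_j:=\E\,d(\eta(j),\eta(j+1))=0$ for every $j$, expectations being taken under $\pi$.

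The first step is to show that $(c_j)_{j\in\Z}$ is constant. Applying the stationarity identity $\E[\mathcal L f]=0$ to the bounded local function $f(\eta)=d(\eta(j),\eta(j+1))$, only the three edges $\langle j-1,j\rangle$, $\langle j,j+1\rangle$, $\langle j+1,j+2\rangle$ contribute. An update on $\langle j,j+1\rangle$ replaces $d(\eta(j),\eta(j+1))$ by $(1-2\mu)\,d(\eta(j),\eta(j+1))$, irrespective of the chosen geodesic, contributing $-2\mu c_j$; an update on $\langle j-1,j\rangle$ (resp.\ $\langle j+1,j+2\rangle$) moves $\eta(j)$ (resp.\ $\eta(j+1)$) a distance exactly $\mu\,d(\eta(j-1),\eta(j))$ (resp.\ $\mu\,d(\eta(j+1),\eta(j+2))$) and hence, by the triangle inequality, changes $d(\eta(j),\eta(j+1))$ by at most that much, contributing a number of modulus at most $\mu c_{j-1}$ (resp.\ $\mu c_{j+1}$). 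Adding the three and using that the sum vanishes gives $2\mu c_j\le\mu c_{j-1}+\mu c_{j+1}$, so $(c_j)$ is a convex sequence, bounded in $[0,1]$, and therefore constant, say $c_j\equiv c$.

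It remains to prove $c=0$. Here I would first reduce to a translation-invariant measure: the dynamics is translation covariant on $\Z$, so each shift $\pi\circ\tau_k^{-1}$ is again invariant with the same value $c$ of the neighbour disagreement, whence any weak-$*$ subsequential limit $\pi'$ of the Ces\`aro averages $N^{-1}\sum_{k=0}^{N-1}\pi\circ\tau_k^{-1}$ is invariant, translation invariant, and still satisfies $\E\,d(\eta(0),\eta(1))=c$ under $\pi'$. So we may assume $\pi$ itself translation invariant; then along the stationary process started from $\pi$ the map $t\mapsto\E\,d(\eta_t(0),\eta_t(1))$ is constantly $c$, and $c=0$ would follow at once if one knew that for \emph{every} translation-invariant initial law the expected neighbour disagreement tends to $0$ — i.e.\ Proposition~\ref{weak} with ``i.i.d.\ uniform'' relaxed to ``translation invariant'', which should be obtainable from the very same Lyapunov estimate, its essential input being translation invariance rather than the particular law. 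This decay estimate is the main obstacle, and is where the absence of a contracting reference point on $S^1$ bites: an update merely \emph{transports} disagreement to the two adjacent edges and is strictly contracting only when the three opinions involved fail to lie, in this cyclic order, on a common short arc, so the triangle-inequality bounds used above are tight precisely on such ``aligned'' configurations, which the dynamics moreover preserves. Concretely, were $c>0$, those one-step bounds would have to be tight $\pi$-almost surely, essentially forcing $\pi$ to be concentrated on configurations on which the opinions along $\Z$ wind monotonically around the circle; ruling out invariant measures of that type — equivalently, showing that the pairwise-averaging dynamics induced on the increment field admits no invariant law with positive mean increment — is, I expect, the crux of the argument.
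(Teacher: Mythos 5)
Your proposal is sound up to a point that coincides with the paper's own route: the easy inclusion, the reduction to showing $c_j:=\E\big|\Delta_0(e_j)\big|=0$, and the generator computation giving $2c_j\le c_{j-1}+c_{j+1}$, hence $c_j\equiv c$ by boundedness, are all exactly the content of \eqref{discrep}--\eqref{discrep2}. But the proof stops where the real work begins: you do not rule out $c>0$. You correctly diagnose that tightness of the triangle-inequality bounds forces the invariant measure onto ``aligned'' configurations whose increments wind monotonically around the circle, and then you write that excluding such measures ``is, I expect, the crux of the argument'' --- which it is, and it is missing. Your fallback suggestion (extend Proposition \ref{weak} to arbitrary translation-invariant initial laws and combine with stationarity in time) is plausible but is asserted rather than proved, and you yourself then cast doubt on it by arguing that the Lyapunov estimate degenerates on winding configurations. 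So as written there is a genuine gap.

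For comparison, the paper closes this gap in two steps. First (Lemma \ref{samesign}), the equality case of \eqref{discrep} is upgraded from an identity in expectation to an almost-sure pointwise statement: under an invariant measure, $\Delta_t(e_v)\cdot\Delta_t(e_{v+1})\ge 0$ for all $v$ and $t$, \emph{and} no modulo wrap-around occurs in a single update --- this is your ``winding'' picture made precise. Second, this is shown to be self-contradictory when $c>0$: by Markov's inequality a block of $K\asymp 1/c^2$ edges carries, with positive probability, same-sign increments summing to more than $2$, i.e.\ more than a full turn of the circle; Corollary \ref{cor} then provides a positive-probability pattern of Poisson events that flattens the interior of the block, forcing a cumulated difference exceeding $1$ onto a single boundary edge, where the reduction modulo $\mathcal S$ necessarily flips its sign and produces two adjacent edges of opposite sign --- contradicting Lemma \ref{samesign}. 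Note that this is precisely the mechanism your objection overlooks: the truncation $|\Delta_t(e)|\le 1$ built into the circle means that transported disagreement cannot simply pile up at a block boundary, so the block-flattening estimate does \emph{not} degenerate on winding configurations. If you want to salvage your own route via a generalized Proposition \ref{weak}, this same observation is what you would need to make it work; alternatively, adopt the paper's direct sign-flip argument.
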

Theorem \ref{main} and Theorem \ref{thm-invariant}, together with the rotational symmetry of our model,
immediately imply the following:

\begin{corollary}\label{weakconv}
For the compass model on $\Z$ with i.i.d.\ uniform initial distribution, the distribution of $\bm{\eta}_t=\big(\eta_t(v)\big)_{v \in \mathbb Z}$
converges weakly to $\int_0^1 \delta_{\bar s}\; ds$ as $t\to\infty$.
\end{corollary}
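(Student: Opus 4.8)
\emph{Proof strategy.} The plan is to show that \emph{every} subsequential weak limit of the laws $\mu_t:=\mathrm{law}(\bm\eta_t)$, $t\to\infty$, coincides with the target measure $\int_0^1\delta_{\bar s}\,ds$, and then to upgrade this to genuine weak convergence by a soft compactness argument. Since $\mathcal S\cong S^1$ is compact metric, $\Omega=\mathcal S^\Z$ is compact and metrizable, hence so is $\mathcal P(\Omega)$ equipped with the weak topology; in particular $(\mu_t)_{t\ge 0}$ has subsequential limit points, and it converges to a measure $\nu^\ast$ precisely when $\nu^\ast$ is its only limit point. So fix $t_n\uparrow\infty$ with $\mu_{t_n}\Rightarrow\nu$ and let us identify $\nu$.

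First we locate the support of $\nu$. For an edge $e=\langle u,v\rangle$ the map $\bm\eta\mapsto d(\bm\eta(u),\bm\eta(v))$ is bounded and continuous on $\Omega$, so weak convergence together with the weak-consensus-in-mean part of Theorem~\ref{main} gives
\[
\int_\Omega d\big(\bm\eta(u),\bm\eta(v)\big)\,\nu(d\bm\eta)=\lim_{n\to\infty}\E d\big(\eta_{t_n}(u),\eta_{t_n}(v)\big)=0 .
\]
As this holds for every edge and $\Z$ is connected, $\nu$ is concentrated on the closed set $D:=\{\bar s:\, s\in\mathcal S\}$ of constant configurations. Via the homeomorphism $s\mapsto\bar s$ of $\mathcal S$ onto $D$ we may therefore write $\nu=\int_{\mathcal S}\delta_{\bar s}\,\rho(ds)$ for a unique $\rho\in\mathcal P(\mathcal S)$. (Since every $\bar s$ is a fixed point of the dynamics, such a $\nu$ is automatically invariant, in agreement with the description of $\mathcal I$ in Theorem~\ref{thm-invariant}; for the present corollary we only need that $\nu$ sits on $D$.)

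Next we identify $\rho$ by rotational symmetry. The i.i.d.\ $\mathrm{unif}(\mathcal S)$ initial law is invariant under the coordinate-wise rotations $R_c\colon\bm\eta\mapsto\bm\eta+c\ (\mathrm{mod}\ \mathcal S)$, $c\in\mathcal S$, and the dynamics \eqref{eqDefLf2}--\eqref{eqDefAe2} commutes with every $R_c$ --- this is exactly why the geodesic at antipodal opinions is picked uniformly at random. Hence each $\mu_t$, and thus $\nu$, is $R_c$-invariant for all $c$, so that $\rho$ is a rotation-invariant probability measure on $\mathcal S=S^1$, i.e.\ $\rho=\mathrm{unif}(\mathcal S)$, and $\nu=\int_{\mathcal S}\delta_{\bar s}\,\mathrm{unif}(ds)$, which is the measure displayed in the statement. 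Since $\nu$ was an arbitrary subsequential limit, the compactness remark above completes the proof.

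This is essentially bookkeeping once Theorems~\ref{main} and~\ref{thm-invariant} are available; the only point needing a little care is that Theorem~\ref{main} provides weak consensus \emph{in mean} only. That is harmless here precisely because $d(\bm\eta(u),\bm\eta(v))$ is a bounded continuous function of the configuration, so its expectation passes to any weak limit --- no almost-sure statement is needed, and nothing about rates or joint laws enters.
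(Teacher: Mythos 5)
Your proof is correct and fills in exactly the argument the paper leaves implicit when it declares the corollary ``immediate'' from Theorem~\ref{main}, Theorem~\ref{thm-invariant} and rotational symmetry: compactness of $\mathcal S^{\Z}$, concentration of any subsequential limit on constant profiles via weak consensus in mean, and identification of the mixing measure by rotation invariance. The only (harmless) deviation is that you observe Theorem~\ref{thm-invariant} is not actually needed --- the weak-consensus-in-mean part of Theorem~\ref{main} already forces any limit point onto the constant configurations.
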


This means that in a ``typical'' configuration, there will be larger and larger intervals in which the individual opinion values (almost) agree,
but on the other hand, these values will change with time.
To illustrate this phenomenon in the case of discrete opinions, consider the easier (and well-known) voter model on $\mathbb Z$ with an
``interface'', i.e.\ starting with the configuration $\bm{\eta}_0$, where $\eta_0(v) = \mathbbm{1}_{\{v>0\}} $. It is known, see
\cite[Chap.\ V, Thm.\ 1.9]{Liggett}, that then $\bm{\eta}_t=(\eta_t(v))_{v \in \mathbb Z}$ converges weakly to $\frac{1}{2}\delta_{\bar 0} +
\frac{1}{2}\delta_{\bar 1}$.  This means that in any fixed finite interval most likely the vertices either all have the value $0$ or all have the
value $1$, each with probability close to $\frac{1}{2}$ for large $t$. On the other hand, the value of each vertex will change infinitely often
as time progresses. In this Boolean example, one can easily see what causes the phenomenon: the configuration at time $t$ will still have
one edge with all values $0$ to the left and all values $1$ to the right. The only thing changing is the position of this ``interface'' between
$0$'s and $1$'s, which moves as a simple symmetric random walk. Hence, for any finite interval, the probability to see both values at time
$t$ equals the probability that the random walk is in that interval at time $t$, which goes to $0$ (due to the central limit theorem). On the
other hand, since the random walk on $\Z$ is recurrent, the interface will return infinitely often to any given edge and hence each vertex
will change its value infinitely often as time progresses. We believe that something similar happens for the compass model and conjecture in
particular that there is no almost-sure weak consensus, see Section \ref{future}.

\begin{remark}\label{rmk1}
Inspecting the proofs shows that indeed Theorem \ref{main} and Corollary \ref{weakconv} remain valid if we start from a translation invariant,
ergodic sequence $(\eta_0(v))_{v \in \Z}$, having the uniform law $\mathrm{unif}(\mathcal{S})$ as its marginal.
For better readability, we gave the statements and proofs for i.i.d.\ initial opinions.  
\end{remark}  

\paragraph{Comparison with a dynamic XY-model.} 
The compass model has the same state space as the famous XY-model, which is the $O(N)$-model in the
special case $N=2$ (see e.g.\ Chapter 9.1 of \cite{FriedVelen17}). However, the behavior of this model is
rather different from the compass model, as we explain next. 

As for the XY-model on the one-dimensional lattice $\mathbb Z$, it is implicit in the work of McBryan and Spencer
\cite{McbrySpenc77} that the correlations decay exponentially fast, consequently there is a unique Gibbs measure in one dimension. 
Bauerschmidt and Bodineau \cite{BauerBodin18} show that this implies a logarithmic Sobolev inequality for
high temperature, and it may be possible to extend this to low temperature. 
The general criterion of Stroock and Zegarlinski \cite{StrooZegar95} then implies that the Glauber dynamics
of the XY-model on $\mathbb Z$ is ergodic, that is, there is a unique stationary distribution (namely, the
Gibbs measure) and for any starting point, the law at time $t$ converges to this stationary distribution.
This is in sharp contrast to our results for the compass model. 

\paragraph{Organization of the paper.} 
In the next section, we introduce the difference process, which plays a crucial role in the forthcoming sections.
We then turn to the compass model on finite graphs -- 
more precisely paths and rings -- in Section \ref{finite}. In addition to it, we draw a comparison to the trivial
standard Deffuant model (i.e.\ i.i.d.\ $\mathrm{unif}([0,1])$ initial opinions and $\theta=1$) on finite graphs
and highlight the qualitative differences in Subsection \ref{vsDeffuant}.
In Section \ref{nostrong}, we verify that in the uniform compass model on $\Z$, due to its symmetries, there can't
be any form of convergence to a common value. That the differences of neighboring opinions in this setting
converge to 0 (in mean) is established in Section \ref{strictlyweak}, completing the proof of Theorem \ref{main}.
A characterization of invariant measures for the compass model on $\Z$ is given in Section \ref{invariant}. 
We close the paper with a discussion of related open problems in Section \ref{future}.

\section{Preliminaries}

\subsection{The difference process} 
Let us now introduce a slight change of perspective and consider differences between neighbors instead of the plain opinion values;
an approach which will turn out to be more suitable in the context of weak consensus.

\begin{definition}
	Given a configuration of opinions $\bm{\eta}_t=(\eta_t(v))_{v\in V}\in(-1,1]^V$, define the corresponding {\em configuration
	of edge differences} $\bm{\Delta}_t=\big(\Delta_t(e)\big)_{e\in E}$ in the following way: Assign to each edge $e=\langle u,v\rangle$
	the unique value $\Delta_t(e)\in(-1,1]$, such that \[\eta_t(u)+\Delta_t(e)=\eta_t(v) \pmod{\mathcal{S}}.\] See Figure \ref{Delta} for a numerical
	illustration on a section of $\Z$.
\end{definition}

\begin{figure}[bth]
	\centering
	\includegraphics[scale=0.8]{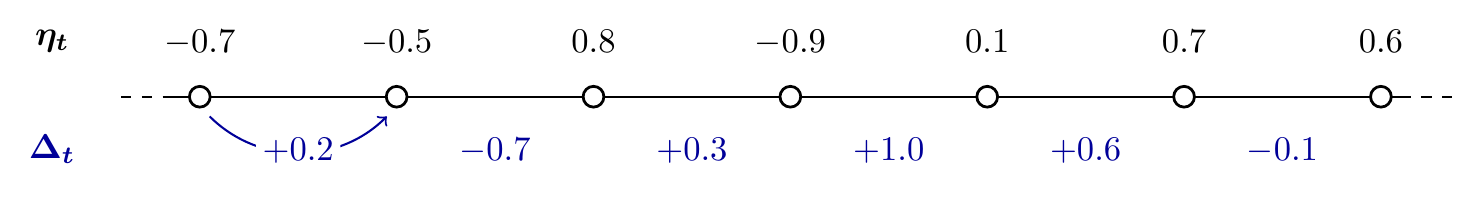}
	\caption{An illustrating example of the transition from plain angles/directions to edge differences in the compass model.\label{Delta}}
\end{figure}
As far as the dynamics is concerned, recall that a Poisson event at time $t$ on the edge $e=\langle u,v\rangle$ 
changes the ($\mathcal{S}$-valued) opinions of the incident vertices $u$ and $v$ by pulling them symmetrically towards their angle bisector
(cf. Figure \ref{compasspic}). 
For the difference process, this corresponds to a $\mu$-fraction of $\Delta_{t-}(e)$ being added to
all edges to which exactly one of $\langle u,v\rangle$ is incident, while at the same time $\Delta_{t-}(e)$ decreases by a factor $1-2\mu$ and no
changes are made on edges to which neither $u$ nor $v$ are incident, i.e.
\begin{equation}\label{gendeltaupdate}
\Delta_t(e')=\begin{cases} (1-2\mu)\,\Delta_{t-}(e),&\text{for }e'=e\\
															\Delta_{t-}(e')+\mu\,\Delta_{t-}(e)\pmod {\mathcal{S}},&\text{for }|e'\cap e|=1\\
															\Delta_{t-}(e'),&\text{for }|e'\cap e|=0.
\end{cases}
\end{equation}
 See Figure \ref{evolution} for an illustration of the dynamics (in the special case of a path).

\subsection{Ergodicity on $\Z$}
A key ingredient in our proofs is the following version of Birkhoff's ergodic theorem.
Let $\bm{\eta}_0=\big(\eta_0(v)\big)_{v\in\Z}$ be the i.i.d.\ sequence of initial opinions and $T$ denote the shift to the left
on $\Z$, i.e.\ $T(v)=v-1$. Given a two-sided sequence $\boldsymbol{X}=(X_v)_{v\in\Z}$, we write $T\boldsymbol{X}$ for the sequence in
which all labels got shifted down by one, i.e.\ the value at $v$ is taken to be $X_{v+1}$ for all $v$.
Further, let $Y_v$ stand for the couple consisting of $\eta_0(v)$ and the Poisson process associated with the edge $\langle v, v+1\rangle$.
Observe that $\boldsymbol{Y}=(Y_v)_{v\in\Z}$ is also an i.i.d.\ sequence and embodies the full randomness of the model. From ergodicity,
we can conclude that the limit of spatial averages almost surely converges to the mean:

\begin{lemma}\label{ergodic}
	Let $\boldsymbol{Y}=(Y_v)_{v\in\Z}$ be as above and $f$ be a real-valued integrable function of $\boldsymbol{Y}$. Further let
	$(\Lambda_n)_{n\in\N}$ be a nested sequence of finite sections of $\Z$ that are strictly increasing in size. Then
	\begin{equation}\label{erglim}
	\lim_{n\to\infty}\frac{1}{|\Lambda_n|}\,\sum_{k\in\Lambda_n} f(T^k\boldsymbol{Y})= \E\big(f(\boldsymbol{Y})\big)\quad\text{a.s.}
	\end{equation}
\end{lemma}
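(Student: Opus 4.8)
The plan is to derive this from the multidimensional (or rather one-dimensional, two-sided) version of Birkhoff's ergodic theorem applied to the measure-preserving system generated by the shift $T$ on the i.i.d.\ sequence $\boldsymbol{Y}=(Y_v)_{v\in\Z}$. First I would observe that, since $(Y_v)_{v\in\Z}$ is an i.i.d.\ sequence (each $Y_v$ being the pair of the initial opinion $\eta_0(v)$ and the Poisson clock on $\langle v,v+1\rangle$), the shift $T$ acting on the product space $E^{\Z}$ (where $E$ is the space in which a single $Y_v$ lives) preserves the product law $\Prob$, and moreover this system is \emph{ergodic}: the tail/invariant $\sigma$-algebra of an i.i.d.\ sequence under the shift is trivial by the Kolmogorov zero–one law (equivalently, i.i.d.\ sequences are mixing, hence ergodic, under the shift). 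This is the standard fact that makes spatial averages converge to expectations.

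Next I would invoke Birkhoff's pointwise ergodic theorem for the two-sided shift: for any $f\in L^1(\Prob)$, the Cesàro averages $\frac1n\sum_{k=0}^{n-1} f(T^k\boldsymbol{Y})$ converge $\Prob$-a.s.\ and in $L^1$ to $\E[f(\boldsymbol{Y})]$ (the limit being constant by ergodicity). The same holds for the averages over $\{-n,\dots,-1\}$ using $T^{-1}$, and combining the forward and backward averages gives convergence of the symmetric averages $\frac{1}{2n+1}\sum_{k=-n}^{n} f(T^k\boldsymbol{Y})$ to the same constant.

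The only remaining point is to pass from averages over the symmetric intervals $\{-n,\dots,n\}$ to averages over an \emph{arbitrary} nested, strictly increasing exhausting sequence $(\Lambda_n)_{n\in\N}$ of finite sections of $\Z$. For this I would use a sandwiching argument: each $\Lambda_n$ is contained in some symmetric window $\{-m_n,\dots,m_n\}$ and contains some symmetric window $\{-\ell_n,\dots,\ell_n\}$, and since $|\Lambda_n|\to\infty$ one can choose these so that $|\Lambda_n|/(2m_n+1)\to 1$ and $(2\ell_n+1)/|\Lambda_n|\to 1$; writing the $\Lambda_n$-average as a difference of (scaled) symmetric-window averages and controlling the error by $\|f\|_{L^1}$-type bounds via the maximal ergodic theorem (or, more simply, by first reducing to bounded $f$ by $L^1$-approximation and treating bounded $f$ by a direct counting estimate) yields \eqref{erglim}. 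I expect this last passage — making the sandwiching uniform enough that the error terms vanish for general $\Lambda_n$ — to be the only genuinely nontrivial step; for the purposes of this paper it can likely be kept brief, since in all our applications $f$ is bounded and the $\Lambda_n$ are intervals, in which case the sandwiching is immediate.
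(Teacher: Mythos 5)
Your proposal takes essentially the same route as the paper, which simply cites that any integrable factor of an i.i.d.\ sequence is ergodic under the shift and then invokes Birkhoff's pointwise ergodic theorem adapted to two-sided sequences. The only quibble is in your final sandwiching step: for, say, $\Lambda_n=\{0,\dots,n\}$ there is no symmetric window $\{-m_n,\dots,m_n\}\supseteq\Lambda_n$ with $|\Lambda_n|/(2m_n+1)\to 1$, but this is harmless because the decomposition you already set up --- writing the $\Lambda_n$-average (after recentering at the left endpoint of $\bigcap_n\Lambda_n$) as a convex combination of a forward and a backward Birkhoff average, each of which converges a.s.\ to $\E f(\boldsymbol{Y})$ --- handles arbitrary nested intervals directly.
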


	Bearing in mind that any integrable factor of an i.i.d.\ sequence is ergodic (with respect to the shift $T$, see for
	instance Thm.\ 7.1.3.\ in \cite{Durrett}), the statement is an immediate consequence of Birkhoff's pointwise ergodic theorem
	(see for instance Thm.\ 7.2.1.\ in \cite{Durrett}) adapted to two-sided sequences.\vspace*{1em}

So, if we look at the regimes from Definition \ref{states} from the perspective of pointwise convergence, ergodicity
of the model on $\Z$ (with respect to shifts) ensures that each of the corresponding three events (being translation
invariant) either occurs with probability $0$ or $1$.

\section{Asymptotics on finite graphs}\label{finite}
In order to get acquainted with both the model and some of the arguments/tools, which will be used in the analysis
of the uniform compass model on $\Z$, we start with an investigation of basic finite networks that share some essential
properties with the two-sided infinite path.

Before we turn to finite networks, however, let us make the following two simple observations about the process of
edge differences, which also apply to infinite networks: First, on any tree (i.e.\ cycle-free graph), the properties of the
initial opinion configuration $\bm{\eta}_0$ in the uniform compass model make $\bm{\Delta}_0$ an i.i.d.\ collection of
$\mathrm{unif}\big((-1,1]\big)$ random variables as well. Second, if the maximal degree in the network is $2$, two compromising
agents change the edge difference on at most three edges. As a consequence, for an update on $e=\langle u,v\rangle$ at time
$t$, the following inequality holds:
\begin{equation}\label{ineq}
\sum_{e'\cap\{u,v\}\neq\emptyset}\big|\Delta_t(e')\big|\quad\leq\quad\sum_{e'\cap\{u,v\}\neq\emptyset}\big|\Delta_{t-}(e')\big|.
\end{equation}
To see this, note that $d\big(\eta_t(u),\eta_{t-}(u)\big)=d\big(\eta_t(v),\eta_{t-}(v)\big)=\mu\cdot\big|\Delta_{t-}(e)\big|$, hence
the edge difference on edges incident to exactly one of $u,v$ can not increase by more than that. Since there are at most two
such edges and $\big|\Delta_{t}(e)\big|=(1-2\mu)\cdot\big|\Delta_{t-}(e)\big|$, the claimed inequality follows. Observe at this
point, that \eqref{ineq} can fail whenever $e$ intersects more than 2 other edges (e.g.\ in $\Z^n,\ n\geq 2$).

\subsection{The compass model on paths}\label{section-paths}

As a warm-up, let us analyze the compass model on finite paths $P_n=(V_n,E_n)$ with vertex set $V_n:=\{1,\dots,n\}$ and
edge set $E_n=\{e_1,\dots, e_{n-1}\}$, where $e_v:=\langle v,v+1\rangle$, $v=1,\dots,n-1$.
Here, a Poisson event on $e_v$ will effect only the differences on edges in the set $\{e_{v-1},e_v,e_{v+1}\}$
-- it might be only $\{e_v,e_{v+1}\}$ or $\{e_{v-1},e_v\}$ respectively, in case $e_v$ lies at one end of the path.
More precisely, the update rule for the process of edge differences on a path reads:
\begin{equation}\label{deltaupd}
\left(\begin{array}{c} \Delta_t(e_{v-1})\\\Delta_t(e_v)\\\Delta_t(e_{v+1})\end{array}\right)=
\left(\begin{array}{c} \Delta_{t-}(e_{v-1})+\mu\,\Delta_{t-}(e_v)\\(1-2\mu)\,\Delta_{t-}(e_v)\\\Delta_{t-}(e_{v+1})+\mu\,\Delta_{t-}(e_v)
\end{array}\right)  \pmod {\mathcal{S}}
\end{equation}
and no changes for edges other than $e_{v-1}$, $e_v$ or $e_{v+1}$; see Figure \ref{evolution} for an example.

\begin{figure}[htb]
	\centering
	\includegraphics[scale=0.9]{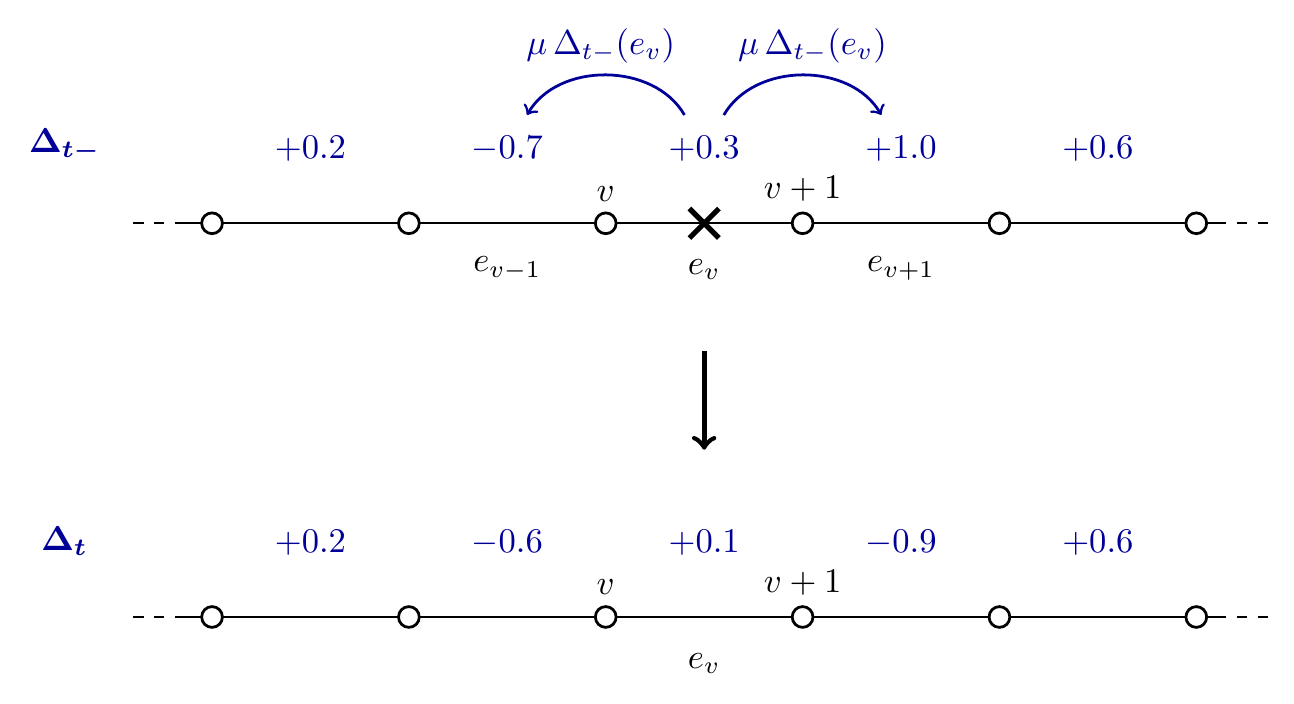}
	\caption{The evolution of the process $(\boldsymbol{\Delta}_t)_{t\geq 0}$, driven by the Poisson events, illustrated by a numerical
		example (here with $\mu=\frac13$) on a path.\label{evolution}}
\end{figure}
Since on $P_n$ the maximal degree is $2$, inequality \eqref{ineq} applies and is sufficient to settle the compass model's
asymptotic behavior: 

\begin{lemma}\label{prepare}
	Fix $n\in\N$ and consider the compass model on the path $P_n$. There will be almost sure weak consensus in the limit, that is, 
	\begin{equation}\label{path_weak}\lim_{t\to\infty}\sum_{e\in E_n} \big|\Delta_t(e)\big|=0\quad \text{a.s.}\end{equation}
\end{lemma}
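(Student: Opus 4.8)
The plan is to use the quantity $S_t := \sum_{e \in E_n} |\Delta_t(e)|$ as a Lyapunov function for the process of edge differences. First I would observe that $S_t$ is non-increasing in $t$: since $P_n$ has maximal degree $2$, inequality \eqref{ineq} applies at every Poisson event on an edge $e_v$, and the differences on all edges not incident to $e_v$ are unchanged; summing, $S_t \le S_{t-}$ at every jump, and $S_t$ is constant between jumps. Hence $S_t$ converges almost surely to some non-negative limiting random variable $S_\infty$, and it suffices to show $S_\infty = 0$ a.s.

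Next I would quantify the strict decrease. When the clock on $e_v$ rings, $|\Delta_t(e_v)| = (1-2\mu)|\Delta_{t-}(e_v)|$, so $|\Delta_{t-}(e_v)|$ drops by $2\mu |\Delta_{t-}(e_v)|$, while the two neighbouring edges can each gain at most $\mu |\Delta_{t-}(e_v)|$; thus the net change satisfies $S_{t-} - S_t \ge 0$, but more is true if $e_v$ is a boundary edge — then there is at most one neighbour, so $S_{t-} - S_t \ge \mu |\Delta_{t-}(e_v)|$ — and even for an interior edge one sees (tracking signs, or just using that $|\Delta(e_{v-1})|$ and $|\Delta(e_{v+1})|$ both increase by exactly $\mu|\Delta_{t-}(e_v)|$ only in a measure-zero sign configuration) the decrease is genuinely positive unless $\Delta_{t-}(e_v) = 0$. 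The cleanest route avoiding sign bookkeeping: focus on a single boundary edge, say $e_1$. Every update on $e_1$ gives $S_{t-}-S_t \ge \mu|\Delta_{t-}(e_1)|$, and every update on $e_2$ adds $\mu \Delta_{t-}(e_2)$ to $\Delta(e_1)$ (mod $\mathcal S$), and no other edge affects $e_1$.

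The core of the argument is then a "clocks ring infinitely often" plus "differences cannot stay bounded away from zero" dichotomy. I would argue by contradiction: on the event $\{S_\infty > 0\}$, pick a vertex; since $S_t \to S_\infty > 0$, at least one edge, say $e_{v^*}$, has $\limsup_t |\Delta_t(e_{v^*})| > 0$. Using that between consecutive rings $\Delta$ is frozen, and that the clock on $e_{v^*}$ rings infinitely often a.s., there are infinitely many times $t$ with $|\Delta_{t-}(e_{v^*})|$ bounded below by a fixed $c>0$; each such ring produces a decrease $S_{t-}-S_t$ that, summed, would force $S_t \to -\infty$ unless these decreases vanish — contradicting $|\Delta_{t-}(e_{v^*})| \ge c$. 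To make the per-ring decrease strictly positive and not merely non-negative at an interior edge, I expect the main obstacle to be the sign cancellation: if $\Delta_{t-}(e_{v-1})$ and $\Delta_{t-}(e_{v+1})$ have the right signs, both $|\cdot|$ terms grow by the full $\mu|\Delta_{t-}(e_v)|$ and the net change in $S$ is zero. I would sidestep this by propagating to the boundary: if $\Delta(e_{v^*})$ stays large, then after a ring on $e_{v^*}$ the edge $e_{v^*-1}$ (closer to the boundary) picks up a contribution, and iterating along a fixed finite chain of rings (boundary edges first) one reaches $e_1$ where the decrease $\mu|\Delta_{t-}(e_1)|$ is unconditional; a Borel–Cantelli / second-Borel–Cantelli argument on the i.i.d.\ Poisson clocks shows such favorable finite ring-sequences occur infinitely often, each extracting a bounded-below decrement from $S_t$, the desired contradiction. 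Finally, $S_\infty = 0$ gives \eqref{path_weak}.
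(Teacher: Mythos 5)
Your proposal is correct in its core mechanism and rests on the same two pillars as the paper's proof: the $\ell^1$ Lyapunov function $S_t=\sum_{e\in E_n}|\Delta_t(e)|$, which is non-increasing by \eqref{ineq} and hence convergent, and the special role of the boundary edge $e_1$, where a ring produces an \emph{unconditional} decrease of at least $\mu|\Delta_{t-}(e_1)|$ because there is only one neighbouring edge to absorb mass. Where you differ is the direction of propagation. The paper works from the boundary inward: convergence of $S_t$ forces $\Delta_t(e_1)\to 0$ (else rings on $e_1$ at arbitrarily large times would extract a non-vanishing decrement infinitely often); then, given $|\Delta_t(e_1)|\le\epsilon$, a ring on $e_2$ increases $|\Delta(e_1)|$ by at least $\mu|\Delta_{t-}(e_2)|-\epsilon$, so $\Delta_t(e_2)\to 0$ as well, and one iterates along the path. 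Each step needs only ``the clock on $e_k$ rings at arbitrarily large times a.s.'' You instead start from a putative bad interior edge $e_{v^*}$ and push its mass outward to $e_1$ via a prescribed finite chain of rings, then invoke a Borel--Cantelli argument to see such chains infinitely often. That route works, and the machinery you sketch (a fixed favourable ring pattern in a unit time interval, uniform in the configuration, occurring with positive probability independently of the past) is exactly what the paper builds later in Corollary \ref{cor} and uses in the proof of Proposition \ref{weak}; but for the finite-path lemma it is heavier than necessary, and the paper's inward ordering lets you dispense with it.

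Two steps in your plan need more care than you give them. First, ``$\limsup_t|\Delta_t(e_{v^*})|>0$ and the clock on $e_{v^*}$ rings infinitely often'' does not by itself yield infinitely many rings on $e_{v^*}$ at which $|\Delta_{t-}(e_{v^*})|\ge c$: the large values could a priori be confined to intervals with no ring on $e_{v^*}$. You must use that $\Delta(e_{v^*})$ is frozen until the next ring on $\{e_{v^*-1},e_{v^*},e_{v^*+1}\}$, that this ring falls on $e_{v^*}$ with conditional probability $\tfrac13$ regardless of the past, and then a conditional (L\'evy) Borel--Cantelli. Second, in the propagation step ``$e_{v^*-1}$ picks up a contribution'' can fail by sign cancellation; the correct dichotomy is that either $|\Delta(e_{v^*-1})|$ gains the full $\mu|\Delta_{t-}(e_{v^*})|$ (so the mass genuinely propagates toward the boundary) or it gains strictly less, in which case $S$ itself drops by the shortfall and you are done at that step. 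Both repairs are routine, but they are the places where a referee would push back; the paper's formulation sidesteps them by never needing a multi-edge chain.
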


\begin{proof}
	By \eqref{ineq}, the random variable $W_n(t)=\sum_{e\in E_n} \big|\Delta_t(e)\big|$ is non-increasing
	in $t\geq0$ (and non-negative). For it to converge, the value $\Delta_t(e_1)$ has to converge to $0$ as $t\to\infty$, since
	any update on $e_1=\langle1,2\rangle$ will decrease $W_n(t)$ by at least $\mu\,\big|\Delta_t(e_1)\big|$ -- and due to independence
	of the Poisson processes there will a.s.\ be updates on $e_1$ at arbitrarily large time points. This in turn can only happen if
	$\Delta_t(e_2)$ also converges to 0: For arbitrary $\epsilon>0$, given $\big|\Delta_t(e_1)\big|\leq\epsilon$, any update on $e_2$ will
	increase $\big|\Delta_t(e_1)\big|$ by at least $\mu\,\big|\Delta_t(e_2)\big|-\epsilon$. Iterating this argument proves the claim.
\end{proof}

\vspace*{1em}

Note that by the finiteness of $P_n$ -- as mentioned just after Definition \ref{states} -- Lemma \ref{prepare} in fact proves almost
sure {\em strong} consensus for the compass model on $P_n$ (even irrespective of the initial configuration).

Using Lemma \ref{prepare}, we are further able to conclude that appropriate sequences of updates can produce a flat
configuration on any finite path in the network $G=(V,E)$ in terms of the absolute values of edge differences, uniformly in
the configurations on which they are applied: Let us consider the compass model on $G$, together with a path
$P_n=(V_n, E_n)\subseteq G$ on $n$ nodes and let
\[F_n:=\big\{e=\langle u,v\rangle;\; e\notin E_n, V_n\cap\{ u,v\} \neq\emptyset\big\}\] denote the edge boundary of $P_n$ in $G$.

\begin{corollary}\label{cor}
	Let $P_n$ and $F_n$ be as above and fix $\epsilon,\delta>0$. Then, uniformly in $T\geq 0$, the following
	event has probability $p=p(\epsilon,\delta)>0$: In the time period $(T,T+\delta]$ there will be no Poisson events on $F_n$ and sufficiently
	many on the edges in $E_n$ so that $\sum_{e\in E_n}\big|\Delta_{T+\delta}(e)\big|\leq \epsilon$, irrespectively of the configuration
	$\boldsymbol{\Delta}_T$.
\end{corollary}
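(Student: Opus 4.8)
The plan is to combine the monotonicity afforded by \eqref{ineq} on paths with a careful bookkeeping of the ``leakage'' of edge differences into the boundary $F_n$, and then to invoke Lemma \ref{prepare} quantitatively. First I would observe that the event ``no Poisson clock on an edge of $F_n$ rings during $(T,T+\delta]$'' has a fixed positive probability $q=q(\delta)=e^{-|F_n|\delta}$, since $F_n$ is finite and these clocks are independent of everything else. On this event, the process $(\boldsymbol{\Delta}_t)_{t\in(T,T+\delta]}$ restricted to the edges $E_n$ evolves exactly as the difference process of the compass model on the \emph{finite path} $P_n$ in isolation: the only way the outside could influence $E_n$ is through an update on an edge of $F_n$, which we have excluded, and updates on $E_n$ only move mass within $E_n\cup F_n$ but, by \eqref{deltaupd}, the contribution sent out of $P_n$ through $F_n$ does not feed back during this window. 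Hence, conditionally on no $F_n$-events and on an arbitrary value of $\boldsymbol{\Delta}_T$, the quantity $W_n(t)=\sum_{e\in E_n}|\Delta_t(e)|$ behaves as in Lemma \ref{prepare}: it is non-increasing, and by that lemma it tends to $0$ almost surely as the number of $E_n$-updates grows.

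The next step is to turn this almost-sure statement into a uniform positive-probability statement over a \emph{fixed finite time} and \emph{uniformly in the starting configuration}. Since $W_n(T)\le n-1$ deterministically (each $|\Delta|\le 1$), I would fix a finite sequence of edges $e_{i_1},\dots,e_{i_m}$ in $E_n$ and a finite number $m=m(\epsilon,n,\mu)$ of updates along that sequence which, applied in this order, is guaranteed to bring $W_n$ below $\epsilon$ \emph{regardless} of the initial values in $[-1,1]^{n-1}$ — this is precisely the mechanism in the proof of Lemma \ref{prepare}, where repeatedly updating $e_1$, then $e_2$, and so on, drives each $|\Delta_t(e_v)|$ below any prescribed threshold, and the required number of repetitions can be chosen uniformly because each update contracts by the explicit factor $(1-2\mu)$ on the updated edge and transfers only a controlled amount elsewhere. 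The probability that the Poisson clocks produce \emph{at least} these $m$ rings, in the prescribed cyclic order on the relevant edges of $E_n$ (and none on $F_n$), within the interval $(T,T+\delta]$ is a fixed positive number depending only on $\delta$, $m$, $|E_n|$ and $|F_n|$; extra rings on $E_n$ only decrease $W_n$ further by \eqref{ineq}, so they do no harm. Taking $p(\epsilon,\delta)$ to be this probability (times $q(\delta)$) gives the claim, and crucially none of these quantities depends on $T$ because the Poisson process is time-homogeneous.

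The main obstacle, and the only place that needs genuine care, is making the phrase ``sufficiently many updates on $E_n$ so that $\sum_{e\in E_n}|\Delta_{T+\delta}(e)|\le\epsilon$ irrespectively of $\boldsymbol{\Delta}_T$'' into something uniform: one must exhibit a \emph{deterministic finite schedule of updates} (a fixed word in the edges of $E_n$) whose effect on $W_n$ is below $\epsilon$ for \emph{every} input configuration in the compact cube $[-1,1]^{E_n}$. This is where I would lean on the iterative argument of Lemma \ref{prepare} rather than re-prove it: after $N$ updates of $e_1$ the value $|\Delta(e_1)|$ is at most $(1-2\mu)^N$ times its current size plus the mass that successive $e_2$-updates could have pushed onto it, so by first flattening $e_1$, then $e_2$, etc., from left to right and iterating the whole sweep finitely many times, each $|\Delta(e_v)|$ can be forced below $\epsilon/(n-1)$; since the map ``apply this fixed word of updates'' is continuous on the compact configuration space and the bound is uniform in the input by the contraction estimates, a single finite word works for all $\boldsymbol{\Delta}_T$ simultaneously. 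One should also note in passing — since the statement only concerns $\boldsymbol{\Delta}$, not $\boldsymbol{\eta}$ — that the case $|\eta(u)-\eta(v)|=1$ in \eqref{eqDefAe2} is irrelevant here: whichever of $A_e^{(1)},A_e^{(2)}$ fires, its effect on the edge differences is still exactly \eqref{gendeltaupdate}, so \eqref{ineq} and the schedule argument are unaffected.
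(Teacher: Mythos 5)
Your overall strategy is the paper's: with positive, time-homogeneous probability the clocks produce no ring on $F_n$ and a prescribed finite word of rings on $E_n$, and uniformity over $\boldsymbol{\Delta}_T$ is obtained by tracking upper bounds on the $|\Delta|$'s that evolve by the linear rule without the modulo reduction. Two steps, however, are not justified as written. First, you condition on the prescribed word occurring merely as a \emph{subsequence} of the rings on $E_n$ and claim that ``extra rings do no harm'' because $W_n$ is non-increasing. Monotonicity only says the sum does not grow; it does not say the word still achieves $W_n\le\epsilon$ when other updates are interleaved, since your guarantee for the word is ``applied exactly, from an arbitrary starting configuration,'' and interleaved updates change the intermediate configurations (an $e_2$-ring placed between your $N$ flattening rings on $e_1$ reloads $e_1$, so the $(1-2\mu)^N$ bound no longer applies). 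The fix is trivial --- condition on the exact pattern of \emph{all} rings on $E_n\cup F_n$ being the prescribed word, which still has positive probability --- but as stated the step fails.

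Second, the crux is the existence of a single finite word that works for every $\boldsymbol{\Delta}_T$, and here you assert rather than prove that iterating a left-to-right sweep finitely many times forces each $|\Delta(e_v)|$ below $\epsilon/(n-1)$ ``by the contraction estimates.'' This is not a one-line contraction: for the upper-bound bookkeeping on a path, an interior update \emph{preserves} the sum of the three affected bounds, so mass is only destroyed at the two boundary edges, and flattening $e_2$ partially reloads the just-flattened $e_1$; convergence of repeated sweeps therefore needs the full (soft) argument of Lemma \ref{prepare}, not an explicit geometric estimate. The paper's device is to turn your upper bounds into an honest autonomous process $\boldsymbol{\xi}$, started from the all-ones configuration and updated by \eqref{deltaupd} with the modulo dropped: inequality \eqref{ineq} still holds for $\boldsymbol{\xi}$, so Lemma \ref{prepare} applies verbatim and yields with positive probability \emph{some} word driving $\sum_{e\in E_n}\xi_\delta(e)\le\epsilon$ --- no explicit schedule required --- and the domination $\xi_t(e)\ge|\Delta_t(e)|$ transfers this to every $\boldsymbol{\Delta}_T$ simultaneously. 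Your closing remark that the case $|\eta(u)-\eta(v)|=1$ is immaterial for the difference process is correct and consistent with this domination.
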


\begin{proof}
	To begin with, note that our general assumptions ($G$ is locally finite) ensure the finiteness of $F_n$. Then convince yourself
	of the following three simple facts:
	\begin{enumerate}[(i)]
		\item On a finite collection of edges, with probability 1 there will be only finitely many and no simultaneous Poisson events during a finite
		time period.
		\item By independence of the Poisson processes, for any $T\geq0,\ s>0,\ m\in\N$ and $e^{(k)}\in E_n\cup F_n$, $1\leq k\leq m$,
		the chronologically ordered pattern of locations of all Poisson events on the edges in $E_n\cup F_n$ during the time period
		$(T,T+s]$ has strictly positive probability to be given by the finite sequence $(e^{(1)},\dots,e^{(m)})$.
		\item The time homogeneity of the Poisson processes implies that for every such pattern and fixed $\delta$, the probability to occur in
		$(T,T+\delta]$ is the same for all $T\geq0$.
	\end{enumerate}
	
	From Lemma \ref{prepare} together with facts (i) and (ii), we can deduce that for every configuration $\boldsymbol{\Delta}_T$,
	there exist $m\in \N$ and $(e^{(1)},\dots,e^{(m)})\in (E_n)^m$ such that the following holds: If the chronologically ordered pattern of locations
	of all Poisson events on the edges in $E_n\cup F_n$ during the time period $(T,T+\delta]$ is given by $(e^{(1)},\dots,e^{(m)})$, we end
	up with $\sum_{e\in E_n}\big|\Delta_{T+\delta}(e)\big|\leq \epsilon$.
	
	To verify the claim, we have to find one such pattern which achieves this for all possible $\boldsymbol{\Delta}_T$ at once. By fact
	(iii) we can set $T=0$ without loss of generality.
	Now consider the configuration of all ones, i.e.\ $\boldsymbol{\xi}\in (\R_{\geq0})^E$ given by $\xi(e)=1$, for all $e\in E$. Each Poisson
	event on an edge $e\in E$ at a time $t>0$ will lead to an update of $\boldsymbol{\Delta}_{t}$ according to \eqref{deltaupd}. We will set
	$\boldsymbol{\xi}_0:=\boldsymbol{\xi}$ and update it simultaneously, according to the very same rule
	\eqref{deltaupd}
	but drop the modulo calculation, i.e.\
	\begin{equation*}
	\xi_t(e')=\begin{cases} (1-2\mu)\,\xi_{t-}(e),&\text{for }e'=e\\
	\xi_{t-}(e')+\mu\,\xi_{t-}(e),&\text{for }|e'\cap e|=1\\
	\xi_{t-}(e'),&\text{for }|e'\cap e|=0.
	\end{cases}
	\end{equation*}
    While this makes $\xi_t(e) > 1$ possible, it is not hard to check that for any $e\in E$ and $t\geq 0$, the domination
	\begin{equation}\label{domi}
	\xi_t(e)\geq \big|\Delta_t(e)\big|
	\end{equation}
	holds uniformly in $\boldsymbol{\Delta}_0$ and the sequence of updates.
	As the inequality \eqref{ineq} remains valid with $\boldsymbol{\xi}_t$ in place of $\boldsymbol{\Delta}_t$ (i.e.\ without the modulo
	calculation), the line of reasoning in the proof of Lemma \ref{prepare} applies without any further amendments to $\{\xi_{t}(e);\;e\in E_n\}$
	as well and by \eqref{domi}, the pattern of locations of Poisson events $(e^{(1)},\dots,e^{(m)})\in (E_n)^m$, which achieves
	$\sum_{e\in E_n}\xi_{\delta}(e)\leq \epsilon$ works for all configurations $\boldsymbol{\Delta}_0$ and thus verifies the claim.
\end{proof}
\vspace*{1em}

\subsection{The compass model on rings}
As an extension of Lemma \ref{prepare} and a warm-up for the analysis of the compass model on $\Z$, let us look at the
model on finite rings. Based on the notation of Section \ref{section-paths}, we write $R_n=(V_n,\mathring{E}_n)$ for the ring on
$n$ nodes, with $V_n$ as before and $\mathring{E}_n=E_n\cup\{e_n\}=\{e_1,\dots, e_{n}\}$, where $e_n:=\langle n,1\rangle$,
see Figure \ref{ring}.

\begin{figure}[htb]
	\centering
	\includegraphics[scale=1]{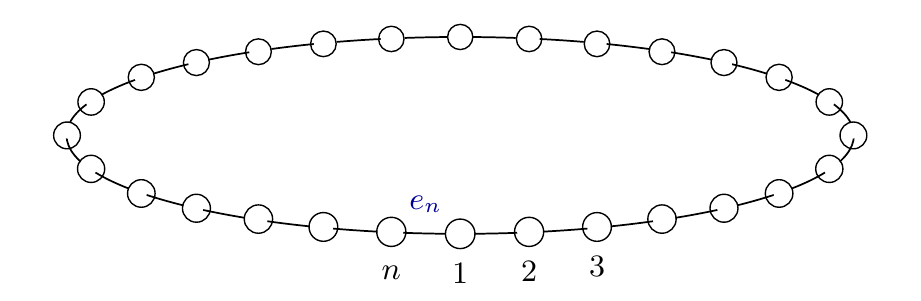}
	\caption{In this subsection, we consider a finite ring as underlying network graph.\label{ring}}
\end{figure}

\begin{proposition}
	Fix $n\in\N$ and consider the compass model on the ring $R_n$. There will be almost sure strong consensus in the limit.
\end{proposition}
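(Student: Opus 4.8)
The plan is to prove almost sure \emph{weak} consensus on $R_n$ first, and then upgrade it to strong consensus using the rotational symmetry and the finiteness of the graph, in the same way that weak consensus is known to imply strong consensus on finite graphs. The obstacle compared with the path $P_n$ is that on a ring inequality \eqref{ineq} is no longer available as a monotonicity statement for the \emph{total} sum $\sum_{e\in\mathring E_n}|\Delta_t(e)|$: every edge now has both endpoints of degree $2$, so an update on $e$ spreads a $\mu$-fraction of $|\Delta_{t-}(e)|$ onto \emph{two} neighbouring edges while $e$ itself only shrinks by a factor $1-2\mu$, and the net change $(1-2\mu)|\Delta_{t-}(e)| + 2\mu|\Delta_{t-}(e)| - |\Delta_{t-}(e)| = 0$ at best preserves, but can strictly increase the total once the modulo wrap-around is taken into account (diametrically opposed neighbours). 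So the straightforward Lyapunov argument from Lemma \ref{prepare} does not run verbatim.

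The key observation that rescues the argument is that the sum of the \emph{signed} edge differences around the ring is conserved modulo $\mathcal{S}$: since $\sum_{v=1}^{n}\big(\eta_t(v+1)-\eta_t(v)\big)\equiv 0 \pmod{\mathcal S}$ by telescoping (indices mod $n$), we have $\sum_{e\in\mathring E_n}\Delta_t(e)\equiv c \pmod{\mathcal S}$ for a value $c$ that never changes in time. First I would dominate the absolute differences by the non-modular process $\boldsymbol\xi_t$ defined exactly as in the proof of Corollary \ref{cor} (drop the modulo, start from $\xi_0(e)=1$ for all $e$): this gives $\xi_t(e)\ge |\Delta_t(e)|$ uniformly in the update sequence and, crucially, for $\boldsymbol\xi_t$ the total $\sum_{e\in\mathring E_n}\xi_t(e)$ \emph{is} non-increasing, since without wrap-around an update on $e$ changes the total by exactly $-2\mu\,\xi_{t-}(e)\le 0$. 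Hence $\sum_e\xi_t(e)$ decreases by at least $2\mu\,\xi_{t-}(e)$ on each update of $e$, so as in Lemma \ref{prepare} it must converge, which forces every $\xi_t(e)\to 0$ (any edge that did not converge to $0$ would, by independence of the Poisson clocks, be updated at arbitrarily late times and keep decreasing the total by a bounded-below amount — and once one edge converges to $0$, its neighbours must too, else an update there would push it back up). By \eqref{domi} this yields $\sum_{e\in\mathring E_n}|\Delta_t(e)|\to 0$ a.s., i.e.\ almost sure weak consensus on $R_n$.

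To pass from weak to strong consensus, I would invoke the remark made right after Definition \ref{states}: on a finite graph weak consensus implies strong consensus. Concretely, weak consensus gives $d(\eta_t(v),\eta_t(w))\to 0$ for every edge, and since $R_n$ is finite and connected, the diameter is bounded, so $\max_{v,w}d(\eta_t(v),\eta_t(w))\to 0$; thus the configuration $\boldsymbol\eta_t$ gets uniformly close to the set of constant configurations $\{\bar s:s\in\mathcal S\}$. It then remains to argue that $\boldsymbol\eta_t$ actually converges (rather than merely approaching that set while drifting around it): because the total signed difference around the ring is conserved, once all edge differences are small the common value can only move in small increments, and a martingale/Cauchy argument — the centre-of-mass-type quantity performs ever-smaller steps and, being bounded, converges — pins down a (random) limit $L$ with $d(\eta_t(v),L)\to 0$ for all $v$. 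The main obstacle I anticipate is precisely this last point: making rigorous that "small oscillations that never stop" cannot prevent convergence of the individual opinions; on a ring one has to be a little careful because $\mathcal S$ is a circle, but restricting attention to the a.s.\ event that all edge differences are eventually, say, $<\tfrac1{10n}$ lets one lift everything to a genuine interval in $\R$ and conclude by a standard contraction/Cauchy argument.
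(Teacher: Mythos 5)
There is a genuine gap, and it sits exactly where you place the weight of the argument. First, the obstacle you identify is not the real one: inequality \eqref{ineq} does apply on $R_n$ (the maximal degree is $2$), and it does make $W(t)=\sum_{e\in\mathring{E}_n}|\Delta_t(e)|$ non-increasing --- an update on $e$ removes $2\mu|\Delta_{t-}(e)|$ from $e$ and, by the triangle inequality for $d$ (the modulo operation notwithstanding), adds at most $\mu|\Delta_{t-}(e)|$ to each of the two neighbouring edges, so the net change is $\le 0$; the paper uses precisely this monotonicity. What genuinely fails on the ring is the \emph{strict} decrease that drives Lemma \ref{prepare}: on $P_n$ the boundary edge $e_1$ has only one neighbouring edge, so each update there costs $W$ at least $\mu|\Delta(e_1)|$, whereas on $R_n$ there is no such edge and $W$ can be exactly conserved (think of $n$ equally spaced opinions, $\Delta(e_v)\equiv 2/n$, winding once around the circle).

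Second, your proposed repair via the dominating linear process $\boldsymbol{\xi}_t$ cannot work, and the computation you base it on is wrong: without the modulo, an update on $e$ changes $\sum_{e'\in\mathring{E}_n}\xi_t(e')$ by $-2\mu\,\xi_{t-}(e)+\mu\,\xi_{t-}(e)+\mu\,\xi_{t-}(e)=0$, not by $-2\mu\,\xi_{t-}(e)$; on a ring both endpoints of $e$ carry a second incident edge. Hence $\sum_{e'}\xi_t(e')\equiv n$ for all $t$ when started from all ones, $\boldsymbol{\xi}_t$ never tends to $0$, and the domination \eqref{domi}, while still valid, is vacuous. This is as it must be: the ring admits configurations with non-zero winding number on which the total difference cannot contract, and only the wrap-around (which can reset the winding number to $0$) destroys them --- a linear dominating process is blind to that. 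The paper's route is different: keep the monotone Lyapunov function $W(t)$ furnished by \eqref{ineq}, and apply Corollary \ref{cor} to the path $P_n=R_n\setminus\{e_n\}$ with edge boundary $F_n=\{e_n\}$. With probability $p>0$, uniformly in $t$ and in $\boldsymbol{\Delta}_t$, the interval $(t,t+1]$ sees no Poisson event on $e_n$ and a pattern of events on $E_n$ forcing $\sum_{e\in E_n}|\Delta_{t+1}(e)|\le\epsilon$; telescoping around the ring then gives $|\Delta_{t+1}(e_n)|\le\epsilon$ as well, so $W(t+1)\le 2\epsilon$, and monotonicity keeps $W$ below $2\epsilon$ forever. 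Independence of these events over disjoint unit time intervals makes one of them occur almost surely, whence $W(t)\to 0$; finiteness of $R_n$ then upgrades weak to strong consensus as you indicate. Your final paragraph on that last step is fine in spirit, but the heart of the proposition is the weak-consensus step, and as written it does not go through.
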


\begin{proof}
As long as there is no Poisson event on the edge $e_n$, the compass model on $R_n$ behaves exactly like the model on
$P_n$. From Lemma \ref{prepare}, we know that in this setting $\sum_{e\in E_n} \big|\Delta_t(e)\big|$
converges to $0$ almost surely.

In fact, Corollary \ref{cor} is the key ingredient for the remainder of the proof.
Choose $\epsilon>0$ and let $A_t$ be the event that during the time period $(t,t+1]$ there are no Poisson events on $e_n$
and sufficiently many on the edges in $E_n$ such that $\sum_{e\in E_n}\big|\Delta_{t+1}(e)\big|\leq \epsilon$, irrespectively of the
configuration $\boldsymbol{\Delta}_t$. Applying the corollary with $G=R_n$, hence $F_n=\{e_n\}$, and $\delta=1$, we
are guaranteed a number $p>0$, such that $\Prob(A_t)=p$ for all $t\geq 0$.

At this point, the following three observations are crucial:
First, by the triangle inequality it trivially holds that $\big|\Delta_t(e_n)\big|\leq\sum_{e\in E_n}\big|\Delta_t(e)\big|$.
Second, $W(t)=\sum_{e\in\mathring{E}_n} \big|\Delta_t(e)\big|$ is non-increasing by  \eqref{ineq} and
third, the events $(A_k)_{k\in\N}$ are independent by the memoryless property of the Poisson processes.
If we now use the sequence $(A_k)_{k\in\N}$ to define a random variable $Y$ by letting $Y(\omega)=k$ whenever
$\omega\in A_k\setminus \bigcup_{j=1}^{k-1} A_j$,  for all $k\in\N$, then $\{W(k)>2\epsilon\}\subseteq\{Y>k\}$ and $Y$ is
geometrically distributed with parameter $p$. We conclude that 
\[\Prob\Big(\lim_{t\to\infty}W(t)\leq 2\epsilon\Big)=1\] and hence
almost sure weak consensus. As before, by the finiteness of the network, this directly implies a.s.\ strong consensus
and thus proves the claim.
\end{proof}

\subsection{Compass vs.\ Deffuant model}\label{vsDeffuant}
In this subsection, we want to compare the asymptotic behavior of the compass model with the one of the trivial ($\theta=1$)
standard Deffuant model -- as mentioned in the introduction, the latter has in principle the same dynamics (compare \eqref{eqDefAe}
and \eqref{eqDefAe2}), however, with the interval $[0,1]$ a convex opinion space.

It is not hard to see that on $P_n$ and $R_n$, the standard Deffuant model with trivial confidence parameter $\theta$ exhibits
the same asymptotics (a.s.\ strong consensus) -- in fact by the very same arguments. Nevertheless, there are qualitative
differences in terms of randomness and distribution of the limiting variable $L$.

Due to the fact that there is no modulo operation involved, the dynamics of the Deffuant model preserves
the sum of updated opinions. For this reason, on a finite graph, the initial opinions already determine the final consensus
value, simply being their average.

Let us, for the sake of simplicity, go back to $P_n=(V_n,E_n)$, the path on $n$ nodes, and
illustrate the qualitative differences between compass and trivial standard Deffuant model with help of the following example:
Start with an i.i.d.\ uniform initial configuration $\big(\eta_0(v)\big)_{v\in V_n}$, to be more precise: with $\mathrm{unif}(\mathcal{S})$
as marginal for the compass and $\mathrm{unif}([0,1])$ as marginal for the trivial Deffuant model.
As derived above, in both models we observe almost sure strong consensus in the limit. However, while the common final
value $L=\lim_{t\to\infty}\eta_t(v)$ in the trivial Deffuant model equals $L_\mathrm{D}(P_n)=\frac1n\,\sum_{v\in V_n}\eta_0(v)$
(and hence does not depend on the dynamics), the modulo operation in the compass model (with fixed starting configuration)
produces in the limit $t\to\infty$ a value
\[L_\mathrm{c}(P_n)=\frac1n\,\Big[2K+\sum_{v\in V_n}\eta_0(v)\Big],\]
where $K$ is an integer-valued random variable, depending on the sequence of updates (and in fact also the initial values).
It is easy to see that given $\big(\eta_t(v)\big)_{v\in V_n}$, the common limit value $L_\mathrm{c}(P_n)$ can depend on the
future dynamics only if $\{\eta_t(v);\;v\in V_n\}$ is not yet contained in a half-circle, more precisely a connected part of $\mathcal{S}$
containing exactly one of each pair of diametrically opposed opinion values.

Furthermore, in the limit of longer and longer paths ($n\to\infty$), the strong law of large numbers dictates that
$L_\mathrm{D}(P_n)$ converges to $\frac12$ almost surely (i.e.\ becomes degenerate), while $L_\mathrm{c}(P_n)$ is a
$\mathrm{unif}(\mathcal{S})$ random variable for all $n$, caused by the rotational symmetry in the opinion space of the compass model.
\vspace{1em}

Finally, in contrast to the Deffuant model, the compass model is {\em noise-sensitive} in the following sense: 
Let us couple two copies of the compass model on $P_n$ by starting from two initial configurations, $\big(\eta_0(v)\big)_{v\in V_n}$
and $\big(\eta'_0(v)\big)_{v\in V_n}$ respectively, which disagree only at one site, i.e.\ there exists $v\in V_n$ s.t.\
$\eta'_0(u)=\eta_0(u)$ for all $u\in V_n\setminus\{v\}$, and further taking the very same i.i.d.\ Poisson processes to drive the
dynamics. Let $L_\mathrm{c}(P_n),\ L'_\mathrm{c}(P_n)$ denote the corresponding limit values of
both copies.
While in the trivial Deffuant model, altering one single initial opinion can change the common limit by at most $\frac1n$, the
two limits $L_\mathrm{c}(P_n)$ and $L'_\mathrm{c}(P_n)$ can be at distance $1$, which is the maximal possible value as
$d(x,y)\leq1$ for all $x,y\in \mathcal{S}$. To see why this is true, let us sketch a numerical example:

\begin{example}
Take $P_{2n-1}$ and let
$\eta_0(v)=\frac{v}{n}-1$, for all $v\in V_{2n-1}=\{1,\dots,2n-1\}$. To get to $\big(\eta'_0(v)\big)_{v\in V_n}$, we only replace $\eta_0(n)=0$ by $\eta'_0(n)=1$. See Figure \ref{butterfly} for an illustration. 
\begin{figure}[htb]
	\centering
	\includegraphics[scale=0.9]{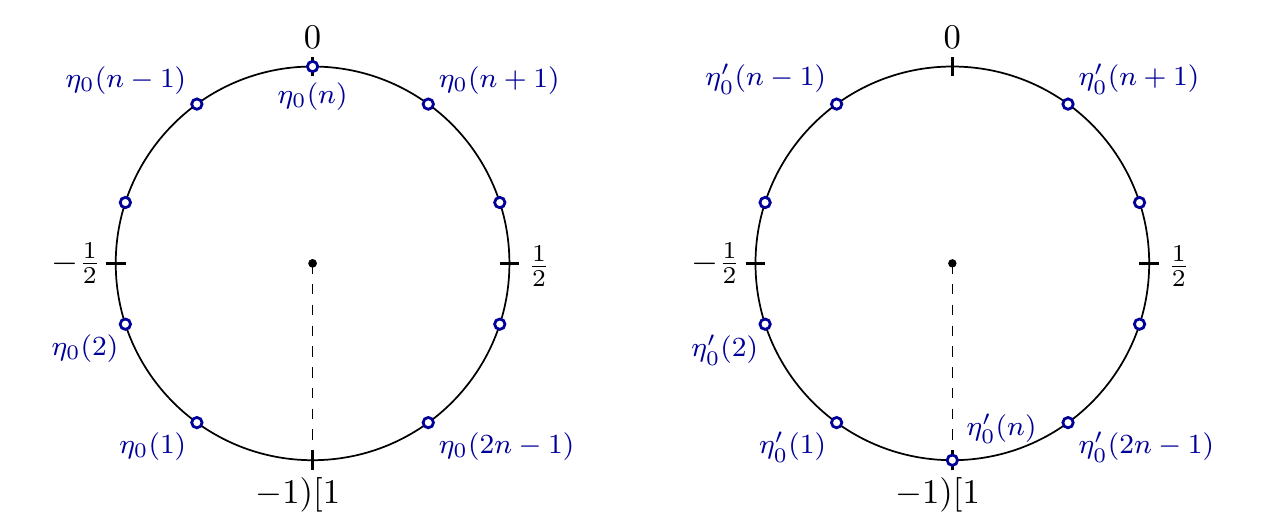}
	\caption{Two almost identical starting configurations that demonstrate the noise-sensitivity of the compass model.\label{butterfly}}
\end{figure}
If up to some large time $T$ there are no updates involving site $n$, but plenty of Poisson events on all other edges
$e\in E_n\setminus\{e_{n-1},e_n\}$, both of the configurations will see the opinions at sites $1$ through $n-1$ gather around the
value $-\frac12$ and opinions at sites $n+1$ through $2n-1$ gather around the value $\frac12$. If after $T$ the Poisson
events on $e_{n-1}$ and $e_n$ are somewhat alternating, i.e.\ not too many updates on one of both during a time period that
does not see any update on the other, it will lead to $L_\mathrm{c}(P_n)=0$ and $L'_\mathrm{c}(P_n)=1$.

It is further not so hard to come up with an example, in which even a slight change of one value can cause this kind of
butterfly effect.
\end{example}

Let us now leave finite paths and focus on the case of $G$ being the one-dimensional integer lattice $\Z$. As far as the
trivial standard Deffuant model is concerned, the asymptotic behavior actually remains almost sure strong consensus
(cf.\ Thm.\ 1.4 in \cite{Lanchier} or Thm.\ 6.5 in \cite{ShareDrink}). In higher dimensions (i.e.\ $\Z^d$, $d\geq2$) even for
trivial bounded confidence parameter (i.e.\ $\theta=1$) so far only almost sure weak consensus could be verified
(cf.\ Thm.\ 3.1 in \cite{Deffuant}). Nonetheless, for the trivial Deffuant model it is believed that even on $\Z^d$, $d\geq2$,
almost sure {\em strong} consensus is the right answer and the step from weak to strong consensus more a technical
cumbersomeness, which has to be taken care of.

The different topology in the opinion space of the compass model, however, renders a central energy argument  (cf.\ Lemma 3.2
in \cite{Deffuant}) void and in some sense opens a door to qualitatively different asymptotics. As we will see in the
subsequent sections, the behavior of the uniform compass model on $\Z$ (in the limit as $t\to\infty$) is indeed {\em strictly weak}
consensus (in mean). Note at this point that simulation studies are rather not a suitable tool to tell apart strong and weak
consensus, since strictly weak consensus cannot appear on finite graphs, as remarked earlier.

\section{No strong consensus in the uniform compass model on \texorpdfstring{$\Z$}{\bf Z}}\label{nostrong}

For the remainder of this paper, we analyze the compass model on $\Z$ with i.i.d.\ $\mathrm{unif}(\mathcal{S})$ initial opinions.
In this section, we show that the symmetries of the uniform compass model rule out strong consensus (in any sense).

\begin{proposition}\label{notstrong}
	For the uniform compass model on $\Z$, there is no strong consensus in the limit (not even in probability).
\end{proposition}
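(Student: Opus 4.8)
The plan is to exploit the rotational symmetry of the uniform compass model together with the translation-ergodicity recorded in Lemma~\ref{ergodic}. Suppose, for contradiction, that strong consensus holds in probability, i.e.\ there is a (possibly random) $L$ with $d(\eta_t(v),L)\to 0$ in probability for every $v$. First I would argue that $L$ must be a genuine random variable that is \emph{independent of the spatial location}: since $d(\eta_t(u),\eta_t(v))\le d(\eta_t(u),L)+d(\eta_t(v),L)\to 0$ in probability for every fixed pair $u,v$, consistency forces all vertices to share the same limit, and by translation invariance of the dynamics and the i.i.d.\ initial law, the law of $L$ is shift-invariant.

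Next I would pin down the law of $L$ using the rotational symmetry on $\mathcal S=S^1$. The generator \eqref{eqDefLf2}--\eqref{eqDefAe2} was deliberately set up so that the dynamics commutes with rotations $R_\alpha\colon x\mapsto x+\alpha \pmod{\mathcal S}$ (this is precisely the reason the diametrically-opposed case is split into $A_e^{(1)},A_e^{(2)}$). Since the initial law $\mathrm{unif}(\mathcal S)^{\otimes\Z}$ is also rotation-invariant, the whole process $\bm\eta_t$ is invariant in law under applying $R_\alpha$ to every coordinate, for every fixed $\alpha$. If $\eta_t(v)\to L$ in probability, then applying $R_\alpha$ gives $\eta_t(v)+\alpha\to L+\alpha$; but the left-hand process has the same law as the original, so $L$ and $L+\alpha \pmod{\mathcal S}$ have the same distribution for every $\alpha$. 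The only probability measure on $S^1$ invariant under all rotations is $\mathrm{unif}(\mathcal S)$, so $L\sim\mathrm{unif}(\mathcal S)$; in particular $L$ is non-degenerate.

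Now comes the contradiction, and this is the step I expect to be the crux. I would combine a \emph{conserved-quantity} argument with ergodicity. The key object is the edge-difference process $\bm\Delta_t$ from the Preliminaries: by inequality \eqref{ineq}, on $\Z$ (max.\ degree $2$) the quantity $\sum_{e'\cap\{u,v\}\ne\emptyset}|\Delta_t(e')|$ is non-increasing at each update, so locally nothing can grow. If there were strong consensus, then in particular $\eta_t(v)-\eta_t(0)=\sum_{e \text{ between }0,v}\Delta_t(e)\to 0$ for each fixed $v$ — the "winding" stored in the edge differences over any bounded stretch must dissipate. But consider the running sum $S_n(t):=\sum_{v=1}^{n}\Delta_t(e_v)$ (taken in $\R$, not mod $\mathcal S$); I would track how a Poisson ring on a single edge changes such sums and observe that, because \eqref{gendeltaupdate} only ever redistributes $\Delta$-mass locally and contracts the hit edge by $1-2\mu$, there is a martingale/conservation structure: the large-scale average winding per edge cannot be created or destroyed by local updates, only transported. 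Using Lemma~\ref{ergodic} applied to a suitable integrable functional of $\bm Y$, the spatial average $\frac1n S_n(t)$ converges a.s.\ as $n\to\infty$ to a deterministic constant that is preserved in $t$; since $\bm\Delta_0$ is i.i.d.\ $\mathrm{unif}((-1,1])$ this constant is $0$, which is consistent, so the contradiction must instead be extracted from the fact that strong consensus would force $L$ to be \emph{measurable with respect to the tail $\sigma$-field in space} while simultaneously, by the local non-increase \eqref{ineq} and the absence of any contraction point (condition (ii) fails for the circle center), the opinions retain a non-vanishing spread: concretely, I would show $\E\,d(\eta_t(0),\eta_t(0)) $ — better, that $\mathrm{Var}$ or a circular-variance surrogate of $\eta_t(0)$ relative to the empirical mean over $[-n,n]$ stays bounded away from $0$ uniformly in $t$, which is incompatible with all coordinates collapsing to a single $L$. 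The honest difficulty is making this last incompatibility rigorous on the circle, where "mean" is not well-defined; the clean route is probably to derive from strong-consensus-in-probability that $d(\eta_t(0),\eta_t(n))\to 0$ uniformly over $n$ along a subsequence, contradict this with a lower bound on $\E\,d(\eta_t(0),\eta_t(n))$ for $n$ large coming from the independence of far-apart initial data and the locality of \eqref{ineq} (information propagates at finite speed under a Poisson clock, so $\eta_t(0)$ and $\eta_t(n)$ for $n\gg t$ are nearly independent, hence typically far apart on $S^1$). That finite-speed-of-propagation estimate, quantitatively controlling the dependence of $\eta_t(0)$ on initial data at distance $n$, is the main obstacle and where I would spend most of the work.
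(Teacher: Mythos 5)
You correctly set up the two symmetries that drive the paper's proof — rotational invariance of the dynamics and initial law (forcing the putative limit $L$ to be $\mathrm{unif}(\mathcal{S})$-distributed, hence non-degenerate) and translation invariance/ergodicity of the driving randomness $\boldsymbol{Y}$ — but you then fail to combine them, and instead drift into an unnecessary and unfinished programme (conserved winding, circular variance, finite speed of propagation) whose crux you yourself flag as unresolved. The contradiction is already in your hands: $L$ is the common limit of \emph{all} coordinates, so (after fixing a version of $L$ as a measurable function of $\boldsymbol{Y}$) it satisfies $L\circ T=L$ a.s.; by Lemma~\ref{ergodic} every shift-invariant event, in particular $\{L\in(0,1]\}$, has probability $0$ or $1$, whereas your own rotational-symmetry step gives it probability $\tfrac12$. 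No lower bound on $\E\,d(\eta_t(0),\eta_t(n))$, no propagation estimate, and no notion of circular mean is needed. This is exactly the paper's argument (the paper does not even need the full uniformity of $L$, only that $\Prob(L\in(-1,0])=\Prob(L\in(0,1])=\tfrac12$).

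One further point you gloss over: for consensus merely \emph{in probability}, $L$ is not a priori an a.s.\ limit, so one must justify treating it as a shift-invariant random variable. The paper handles this by a Cantor diagonal extraction: choose nested subsequences so that along $t_j=k^{(j)}_j$ one has $d(\eta_{t_j}(v),L)\to 0$ a.s.\ simultaneously for all $v\in\Z$, and then run the a.s.\ argument along that subsequence. Your sketch of the in-probability case (uniformity over $n$ of $d(\eta_t(0),\eta_t(n))\to 0$ along a subsequence) is not what is needed and would not obviously follow; replace it with the diagonal argument and the proof closes.
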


\begin{proof}
	This result readily follows from the symmetries and invariances of the model.
	Let us first rule out almost sure strong consensus and assume for contradiction that there exists a $(-1,1]$-valued
	random variable $L$ for which \eqref{eqDefStrongConsensus} holds a.s. Consequently,
	$$B=\Big\{\lim_{t\to\infty}d\big(\eta_{t}(v),L\big)=0,\text{ for all } v\in\Z\Big\}$$
	is an almost sure event and either $B\cap\{L\in(-1,0]\}$ or  $B\cap\{L\in(0,1]\}$ has probability at least $\frac12$.
	As the uniform initial opinions entail a complete rotational symmetry in $\mathcal{S}$, we can in fact conclude that
	these probabilities coincide, i.e.\ $\Prob(B\cap\{L\in(-1,0]\})=\Prob(B\cap\{L\in(0,1]\})=\frac12$.
	
	Finally, the event $B\cap\{L\in[0,1)\}$ is invariant with respect to shifts on $\Z$, thus forced to either have probability $0$
	or $1$, due to ergodicity of the model: Take $f=\mathbbm{1}_{B\cap\{L\in[0,1)\}}$ in Lemma \ref{ergodic}, which makes the left
	hand side of \eqref{erglim} either have value 0 or 1, depending on $f(\boldsymbol{Y})$, but not $n$.
	From this contradiction it follows that there is no random variable $L$ fulfilling \eqref{eqDefStrongConsensus} almost surely.	
	
	It remains to verify that assuming the existence of a random variable $L$ such that only
    \begin{equation}\label{scp}
    d\big(\eta_t(v),L\big)\stackrel{\Prob}{\longrightarrow}0,\text{ as } t\to\infty,\text{ for all } v\in \Z
    \end{equation}
   holds, similarly leads to a contradiction.

For ease of notation, let us relabel the vertices of $\Z$ to form a one-sided infinite sequence, for example by means of the
standard enumeration \[v_1=0,\ v_{2m}=m\quad\text{and}\quad v_{2m+1}=-m\quad\text{for all }m\in\N.\]
By the subsequence criterion (cf.\ for instance Thm.\ 20.5 in \cite{billingsley}), we can deduce from \eqref{scp} that there
exists a subsequence of $\big(\eta_k(v_1)\big)_{k\in \N}$, say $\big(\eta_{k^{(1)}_{j}}(v_1)\big)_{j\in \N}$, such that
$d\big(\eta_{k^{(1)}_j}(v_1),L\big)$ converges almost surely to $0$ as $j\to\infty$. By the same token, we can choose a
subsequence $\big(k^{(2)}_{j}\big)_{j\in \N}$ of $\big(k^{(1)}_{j}\big)_{j\in \N}$ such that $d\big(\eta_{k^{(2)}_j}(v_2),L\big)$
converges almost surely to $0$ as well. Now iterate this thinning and use Cantor's diagonal argument:
Setting $t_j:=k^{(j)}_{j}$, we accomplished that $\big(\eta_{t_j}(v_m)\big)_{j\in\N}$ is a subsequence of
$\big(\eta_{k^{(m)}_{j}}(v_m)\big)_{j\in\N}$ for all $m\in\N$ (apart from finitely many elements in the beginning) and
consequently \[d\big(\eta_{t_j}(v),L\big)\stackrel{\mathrm{a.s.}}{\longrightarrow}0,\text{ as } j\to\infty,\text{ for all } v\in \Z.\]
Taking $B=\Big\{\lim_{j\to\infty}d\big(\eta_{t_j}(v),L\big)=0,\text{ for all } v\in\Z\Big\}$, the reasoning used in the almost sure
case above still applies and hence the claim follows.	
\end{proof}

\vspace{1em}
\begin{remark}\label{rmk2}
	\begin{enumerate}[(a)]
		\item The rotational symmetry of the model and its initial configuration implies
		$\mathcal{L}\big(\eta_t(v)\big)=\mathrm{unif}(\mathcal{S})$ for all $v\in \Z$ and all times $t>0$. The independence property
		of $\big(\eta_0(v)\big)_{v\in\Z}$ is, however, lost immediately. 
		The fact that $\eta_t(0)$ has a uniform distribution on $\mathcal{S}$ for all $t$ implies that the marginals of any possible
		(weak) limit must be uniform as well.
		\item Further, observe that the proof of Proposition \ref{notstrong} is based on the symmetries and invariances of the
		uniform compass model only. If one introduces -- in analogy to the non-trivial Deffuant model -- a confidence bound
		$\theta\in(0,1)$, such that only opinions at distance at most $\theta$ will symmetrically approach each	other in an
		update, the line of reasoning above still applies, and consequently Proposition \ref{notstrong} holds just as well for a
		uniform compass model with bounded confidence.
		\item Finally, since $\Z$ is a subgraph of $\Z^n$ for all $n\geq2$, the above proof immediately transfers to
		 higher dimensions, i.e.\ the statement of Proposition \ref{notstrong} holds true for the uniform compass model on
		 $\Z^n, n\geq2$.
	\end{enumerate}
\end{remark}

\section{A case of strictly weak consensus}\label{strictlyweak}

In view of Definition \ref{states} and Proposition \ref{notstrong}, the behavior of the uniform compass model on $\Z$ in the limit
either has to be no consensus or a form of weak consensus. 
We establish the latter: 

\begin{proposition}\label{weak}
	The compass model on $\Z$ with uniform initial opinions exhibits weak consensus in mean.
\end{proposition}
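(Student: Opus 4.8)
The goal is to show $\E\big|\Delta_t(e)\big|\to 0$ as $t\to\infty$ for each (equivalently, by translation invariance, for one) edge $e$ of $\Z$. The natural starting point is the weighted total-variation quantity used on finite paths: on $\Z$ the sum $\sum_{e\in E}|\Delta_t(e)|$ is typically infinite, so I would instead work with a spatial average and invoke Lemma \ref{ergodic}. Concretely, set $g(t):=\E\big|\Delta_t(e_0)\big|$ for the fixed edge $e_0=\langle 0,1\rangle$; by translation invariance this equals the expected absolute edge difference on every edge. The first step is to derive a differential inequality for $g$. Recall from \eqref{ineq} that for an update on an edge of $\Z$ the sum of $|\Delta|$ over the (at most three) incident edges does not increase, and from the explicit update rule \eqref{deltaupd} that an update on $e_v$ sends $|\Delta(e_v)|\mapsto (1-2\mu)|\Delta(e_v)|$ while the two neighbouring edges change by at most $\mu|\Delta_{t-}(e_v)|$ each. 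Taking expectations over the Poisson dynamics (each edge rings at rate $1$) and using the union/triangle bound carefully, one gets that the average of $|\Delta|$ is non-increasing in $t$; the issue is that this alone does not give strict decrease, since cancellation (gain on neighbouring edges) could, a priori, exactly offset the loss.

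The heart of the argument is therefore a \emph{strict} dissipation estimate, and this is where I expect the main obstacle. The key observation is that the neighbouring edges only gain the full amount $\mu|\Delta_{t-}(e_v)|$ when the signs line up adversarially; the amount of mass that genuinely ``escapes'' the system per unit time should be controlled below by something like $\mu(1-\mu)\cdot(\text{average of }|\Delta|)$ times a positive factor coming from the fact that neighbouring edge differences are not perfectly anti-correlated. To make this rigorous without tracking signs, I would use the $\xi$-domination trick from Corollary \ref{cor}: run the auxiliary process $\boldsymbol\xi_t$ with the same Poisson clocks but dropping the modulo and \emph{also} dropping the cancellation, so $\xi_t(e)\ge|\Delta_t(e)|$ pointwise. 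For $\boldsymbol\xi$ the dynamics is linear and sign-free, and the spatial average $\bar\xi_t:=\lim_n |\Lambda_n|^{-1}\sum_{k\in\Lambda_n}\xi_t(e_k)$ exists a.s.\ and in mean by Lemma \ref{ergodic}; an update on $e_v$ changes $\xi(e_{v-1})+\xi(e_v)+\xi(e_{v+1})$ from $\xi_{t-}(e_{v-1})+\xi_{t-}(e_v)+\xi_{t-}(e_{v+1})$ to $\xi_{t-}(e_{v-1})+\xi_{t-}(e_{v+1})+(1-2\mu+2\mu)\xi_{t-}(e_v)$ — so $\boldsymbol\xi$ \emph{conserves} the spatial average and does not help directly. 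This tells me the cancellation cannot be thrown away, and the real proof must retain it.

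So the workable route is: keep the signed process $\boldsymbol\Delta_t$, and show that $h(t):=\E\big[\,|\Delta_t(e_0)|+|\Delta_t(e_1)|-|\Delta_t(e_0)+\Delta_t(e_1)|\,\big]$, the ``defect'' in the triangle inequality across two consecutive edges (which is exactly the mass that disappears when an update on the middle edge $e_1=\langle 1,2\rangle$ happens and the two outer edges merge favourably), is bounded away from $0$ whenever $g(t)$ is bounded away from $0$. Here one uses the symmetry of the model: by the rotational symmetry, $\mathcal L(\eta_t(v))=\mathrm{unif}(\mathcal S)$ for all $t$ (Remark \ref{rmk2}(a)), and more is true — the joint law of a block of opinions is exchangeable under sign flip and rotation enough to force $\E[\Delta_t(e_0)\mid\mathcal F]$-type cross terms to have a definite sign, yielding $h(t)\ge c(\mu)\,g(t)^2$ or at least $h(t)\ge c(\mu)\,\psi(g(t))$ for a continuous $\psi$ with $\psi(x)>0$ for $x>0$. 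Then the rate-$1$ updates on each edge give the differential inequality $g'(t)\le -c'(\mu)\,\psi(g(t))$, whose only bounded non-negative solution with $\psi>0$ off zero forces $g(t)\to 0$; combined with monotonicity of $g$ this finishes the proof. I would also remark that once $\E|\Delta_t(e)|\to 0$, weak consensus in mean in the sense of \eqref{eqDefWeakConsensus} follows because $d(\eta_t(u),\eta_t(v))\le|\Delta_t(\langle u,v\rangle)|$. The delicate point to get right is the sign/correlation input that makes the triangle-defect $h(t)$ genuinely positive — showing that neighbouring edge differences are not typically anti-aligned is the crux, and I expect it to require the ergodic/exchangeability structure of the uniform initial law rather than any pathwise monotonicity.
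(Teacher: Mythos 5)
Your setup is right (reduce to $\E|\Delta_t(e_0)|\to 0$, use monotonicity, note that the sign-free domination $\boldsymbol\xi_t$ conserves the spatial average and therefore cannot work), but the proof has a genuine gap at exactly the point you flag as the crux: the lower bound $h(t)\ge c(\mu)\,\psi(g(t))$ on the triangle-inequality defect is asserted, not proved, and it is the entire content of the proposition. Worse, it is doubtful that it can be obtained from the symmetry/exchangeability considerations you invoke. The rotation and reflection symmetries of the uniform compass model only say that $(\Delta_t(e_0),\Delta_t(e_1))$ has the same law as $(-\Delta_t(e_0),-\Delta_t(e_1))$; they say nothing about whether neighbouring edge differences tend to share a sign. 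On any configuration where $\Delta_t(e_v)$ and $\Delta_t(e_{v+1})$ have the same sign the defect $|\Delta_t(e_v)|+|\Delta_t(e_{v+1})|-|\Delta_t(e_v)+\Delta_t(e_{v+1})|$ vanishes identically, and such ``monotone winding'' configurations are not ruled out by symmetry --- indeed Lemma \ref{samesign} of the paper shows that any hypothetical non-constant invariant law would be concentrated on exactly such configurations. So a local, two-edge dissipation estimate of the form $h(t)\ge\psi(g(t))$ cannot hold configuration-wise, and establishing it in expectation would require controlling the sign correlations along the whole trajectory, which your argument does not do. (There is also a secondary issue: you would still need $g$ to be absolutely continuous to run the differential inequality, but that is minor by comparison.)

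The paper's proof avoids the sign problem entirely by exploiting the compactness of the circle. Assuming $\E|\Delta_t(e_0)|\ge\epsilon$ for all $t$, it partitions $\Z$ into blocks of $K=\lceil 6/\epsilon\rceil$ edges. With probability at least $\epsilon/2$ a given block carries total variation at least $K\epsilon/2\ge 3$ at time $t$ (event $B^{(j)}_t$), and by Corollary \ref{cor} with a uniform probability $p>0$ the interior of the block is flattened to total variation at most $\tfrac12$ during $(t,t+1]$ while the two boundary edges see no Poisson events (event $A^{(j)}_t$). The point is that after flattening, the two boundary edges of the block can each hold at most $1$ in absolute value \emph{because of the modulo reduction}, so the block's total variation drops from at least $3$ to at most $\tfrac52$: a guaranteed dissipation of $\tfrac12$ on $A^{(j)}_t\cap B^{(j)}_t$, which has probability at least $p\epsilon/2$, regardless of signs. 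Combined with the fact that \eqref{ineq} makes the total variation between successful blocks non-increasing, and with the ergodic theorem (Lemma \ref{ergodic}) to convert the positive spatial density of such events into a decrease of the expectation, this yields $\E|\Delta_t(e_0)|-\E|\Delta_{t+1}(e_0)|\ge p\epsilon/(4K)$ uniformly in $t$, a contradiction. If you want to salvage your approach, you would need to replace the two-edge defect by a mechanism that forces dissipation even on sign-aligned configurations; the winding-plus-modulo argument above is precisely such a mechanism.
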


To see, how Corollary \ref{cor} comes in useful here, imagine the following scenario: During a given time interval, the agents on
a fixed finite section of $\Z$ interact a lot, while there is no interaction with the two neighboring ones left and right of this section.
Albeit rarely, this scenario will occur, vacate the corresponding section in terms of the absolute values of edge differences 
(irrespectively of the configuration before) and as a result enable us to establish weak consensus in mean.

It should be mentioned that for a fixed edge $e_u=\langle u,u+1\rangle$, the value of $\Delta_t(e_u)$ matches $d\big(\eta_t(u),\eta_t(u+1)\big)$,
apart from the fact that it additionally carries the sign (clockwise ($+$) or counterclockwise ($-$)) of the smallest angle formed by the
directions represented by $\eta_t(u)$ and $\eta_t(u+1)$. Bearing $d\big(\eta_t(u),\eta_t(u+1)\big)=\big|\Delta_t(e_u)\big|$ in mind, weak consensus
is equivalent to the corresponding componentwise convergence of $\boldsymbol{\Delta}_t$ to $0$. 

As a final preparation for the proof of Proposition \ref{weak}, let us verify that the expected value $\E\big|\Delta_t(e)\big|$, which by symmetry
coincides for all $e\in E$, does not increase with $t$.

\begin{lemma}\label{monotone}
The function $t\mapsto \E\big|\Delta_t(e)\big|,\ t\in[0,\infty)$ is non-increasing.
\end{lemma}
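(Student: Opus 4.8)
The plan is to show that a single Poisson update on any edge cannot increase the quantity $\E\big|\Delta_t(e)\big|$, averaged over the randomized choice of geodesic, and then conclude by the structure of the graphical construction. First I would fix a reference edge $e=e_u=\langle u,u+1\rangle$ and condition on the configuration $\bm{\Delta}_{t-}$ just before a potential update. The only updates that can affect $\Delta_t(e_u)$ are those occurring on $e_{u-1}$, $e_u$, or $e_{u+1}$, so I would analyze these three cases using the update rule \eqref{gendeltaupdate}. An update on $e_u$ itself replaces $\Delta_{t-}(e_u)$ by $(1-2\mu)\Delta_{t-}(e_u)$ (modulo $\mathcal S$), and since $\mu\in(0,\tfrac12]$ and $|\Delta_{t-}(e_u)|\le 1$, no wrap-around occurs and $|\Delta_t(e_u)| = (1-2\mu)|\Delta_{t-}(e_u)| \le |\Delta_{t-}(e_u)|$, so this update is harmless even pointwise. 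An update on an adjacent edge, say $e_{u-1}$, replaces $\Delta_{t-}(e_u)$ by $\Delta_{t-}(e_u) + \mu\,\Delta_{t-}(e_{u-1}) \pmod{\mathcal S}$; this can increase $|\Delta_t(e_u)|$ for some realizations, so a pointwise bound fails and I must argue in expectation.

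The key observation is that, by part (a) of Remark \ref{rmk2} (equivalently, by the rotational symmetry of the uniform initial law, which is preserved by the dynamics), the pair $\big(\Delta_{t-}(e_{u-1}),\Delta_{t-}(e_u)\big)$ has a distribution that is invariant under the simultaneous rotation $\Delta(e_{u-1})\mapsto \Delta(e_{u-1}) + c$, $\Delta(e_u)\mapsto \Delta(e_u) - c$ for any $c\in\mathcal S$ — indeed, rotating the opinion at vertex $u$ alone by $c$ does exactly this to the two incident edge differences and leaves the law of the whole configuration unchanged. Hence the conditional law of $\Delta_{t-}(e_u)$ given $\Delta_{t-}(e_{u-1})$ is itself rotation-invariant, i.e.\ $\mathrm{unif}(\mathcal S)$. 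Now, for $\Delta_{t-}(e_u)$ uniform on $\mathcal S$ and independent of the increment $y := \mu\,\Delta_{t-}(e_{u-1})$, the shifted variable $\Delta_{t-}(e_u) + y \pmod{\mathcal S}$ is again uniform on $\mathcal S$, so $\E\big|\Delta_t(e_u)\big| = \E\big|\mathrm{unif}(\mathcal S)\big| = \E\big|\Delta_{t-}(e_u)\big|$ — the update leaves the expectation exactly unchanged. The same holds verbatim for an update on $e_{u+1}$.

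Putting the pieces together: between Poisson rings $\Delta_t(e_u)$ is constant; a ring on $e_u$ decreases $\E|\Delta_t(e_u)|$ (or leaves it unchanged if $\mu=\tfrac12$); a ring on either neighbour leaves $\E|\Delta_t(e_u)|$ unchanged; and a ring on any other edge has no effect at all. Integrating over the Poisson clocks via the graphical construction — more carefully, writing $\E|\Delta_t(e_u)|$ as a function of $t$ and noting it changes only at clock rings on $\{e_{u-1},e_u,e_{u+1}\}$, each of which is non-increasing in expectation by the above — yields that $t\mapsto \E\big|\Delta_t(e)\big|$ is non-increasing. I expect the main obstacle to be the justification that the conditional law of $\Delta_{t-}(e_u)$ given the neighbouring difference is genuinely uniform; this must be derived cleanly from the rotational invariance of the joint law of $\bm{\eta}_t$ (hence of $\bm{\Delta}_t$) under rotating a single vertex's opinion, rather than merely from the marginals being uniform, and one should also record that the modulo operation is an isometry of $\mathcal S$ so that $|\cdot|$ interacts correctly with the wrap-around.
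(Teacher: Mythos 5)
Your reduction to the three edges $e_{u-1},e_u,e_{u+1}$ and the pointwise treatment of an update on $e_u$ itself are fine, but the key step for updates on a neighbouring edge is false, and the obstacle you flag at the end is not a technicality --- it is fatal. Rotating the opinion at the single vertex $u$ by $c$ preserves the law of $\bm{\eta}_t$ only at $t=0$, where the opinions are i.i.d.; for $t>0$ the coordinates are correlated (cf.\ Remark \ref{rmk2}(a)) and a single-site rotation is not a symmetry of the dynamics, since the update on $\langle u-1,u\rangle$ depends on the difference $\eta(u)-\eta(u-1)$, which such a rotation changes. The only surviving symmetry is the \emph{global} rotation, and that acts trivially on edge differences, so it yields no information about the law of $\bm{\Delta}_t$. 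Moreover, your conclusion that the conditional law of $\Delta_{t-}(e_u)$ given $\Delta_{t-}(e_{u-1})$ is $\mathrm{unif}(\mathcal S)$ for all $t$ cannot be true: it would force $\E\big|\Delta_t(e_u)\big|=\tfrac12$ for every $t$, which contradicts Proposition \ref{weak} (which gives $\E\big|\Delta_t(e)\big|\to0$) and is also internally inconsistent with your own observation that rings on $e_u$ strictly decrease this expectation.

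The way out is not to control a single edge update-by-update (which would require knowing the unknown law of $\bm{\Delta}_t$) but to exploit the pointwise inequality \eqref{ineq}: an update on $e_u$ changes the three values $\big|\Delta(e_{u-1})\big|,\big|\Delta(e_u)\big|,\big|\Delta(e_{u+1})\big|$ in such a way that their \emph{sum} cannot increase --- the gain of at most $\mu\big|\Delta_{t-}(e_u)\big|$ on each neighbour is paid for by the loss of $2\mu\big|\Delta_{t-}(e_u)\big|$ on $e_u$ itself. The paper then writes $\E\big|\Delta_t(e_0)\big|$ as the almost sure limit of spatial averages over blocks $\{-i_n,\dots,j_n\}$ via Lemma \ref{ergodic}, chooses the block boundaries so that no Poisson event touches them during $(t,t+\epsilon]$, and concludes that each such block average --- hence the expectation --- is non-increasing. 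No distributional identity for $\bm{\Delta}_t$ is needed.
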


\begin{proof}
To begin with, recall that a Poisson event on $e_v$ at time $t$ can change the $\Delta$-values on the edges $e_v, e_{v-1}$ and
$e_{v+1}$ only and we further have 
\begin{equation*}
\big|\Delta_t(e_{v-1})\big|+\big|\Delta_t(e_v)\big|+\big|\Delta_t(e_{v+1})\big|\leq \big|\Delta_{t-}(e_{v-1})\big|+\big|\Delta_{t-}(e_v)\big|
+\big|\Delta_{t-}(e_{v+1})\big|
\end{equation*}
by \eqref{ineq}. 
For any $t\geq 0$, we can take $f(\boldsymbol{Y})=\big|\Delta_t(e_0)\big|$ in Lemma \ref{ergodic} to get
\begin{equation}\label{2.1adapt}
\lim_{i,j\to\infty} \frac{1}{i+j}\,\sum_{v=-i}^{j-1} \big|\Delta_t(e_v)\big|= \E\big|\Delta_t(e_0)\big|\quad\text{a.s.}
\end{equation}
Next, we can conclude from the independence of the Poisson processes associated to the edges that for all $t,\epsilon\geq 0$, there will
a.s.\ be two strictly increasing sequences of natural numbers, say $(i_n)_{n\in\N}$ and $(j_n)_{n\in\N}$, with the property that neither of
the edges incident to a vertex $v\in\{-i_n, j_n;\; n\in\N\}$ has seen a Poisson event in the time interval $(t,t+\epsilon]$.

This choice ensures that for each $n\in\N$, the average edge difference on the section $\{-i_n,\dots, j_n\}$, i.e.\ 
$\frac{1}{i_n+j_n}\,\sum_{v=-i_n}^{j_n-1} \big|\Delta_s(e_v)\big|$, can change only at times $s\in(t,t+\epsilon]$, which
involve a Poisson event on the section between the vertices $-i_n$ and $j_n$ (in fact $-i_n+1$ and $j_n-1$), and further that it must be
non-increasing as a consequence of the above inequality \eqref{ineq}.
Together with \eqref{2.1adapt}, this establishes the claimed monotonicity of $\E\big|\Delta_t(e)\big|$.
\end{proof}

\vspace*{1em}
Note that the statement of Lemma \ref{monotone} is not limited to the uniform case, but holds for the compass model in
general, i.e.\ for initial marginal distributions other than $\mathrm{unif}(\mathcal{S})$.\vspace*{1em}

\begin{nproof}{of Proposition \ref{weak}}
Let us assume for contradiction that some $\big(\Delta_t(e)\big)_{t\geq0}$ does not converge to 0 in mean as $t\to\infty$.
Due to stationarity, we can assume $e=e_0$ without loss of generality.

Our assumption (together with Lemma \ref{monotone}) implies
\begin{equation}\label{wrongass}
\tfrac12=\E\big|\Delta_0(e)\big|\geq\E\big|\Delta_t(e)\big|\geq\lim_{s\to\infty} \E\big|\Delta_s(e)\big|=\epsilon,
\end{equation}
for some $\epsilon>0$ and all $t\geq 0$.
We will lead this to a contradiction by showing that given \eqref{wrongass}, the difference $\E\big|\Delta_t(e)\big|-\E\big|\Delta_{t+1}(e)\big|$
is bounded away from $0$ (uniformly in $t$), thus forcing $\lim_{s\to\infty} \E\big|\Delta_s(e)\big|=-\infty$.

To achieve this, we fix $t\geq0$, set $K=\big\lceil\frac6\epsilon\big\rceil$ and do the following construction: Partition the edges of $\Z$ into
disjoint blocks of length $K$, i.e.\ paths $(P^{(j)}_K)_{j\in\Z}$, such that $P^{(j)}_K$ connects the vertices $jK$ and $(j+1)K$, for all $j\in \Z$.

Next, observe that $\frac{1}{K}\,\sum_{v=0}^{K-1} \big|\Delta_t(e_v)\big|$ is a $[0,1]$-valued random variable with expectation
$\E\big|\Delta_t(e)\big|\geq\epsilon$. Therefore, it must hold (uniformly in $t$) that
\begin{equation}\label{posprob}
\Prob\bigg(\frac{1}{K}\,\sum_{v=0}^{K-1} \big|\Delta_t(e_v)\big|\geq\frac{\epsilon}{2}\bigg)\geq\frac\epsilon 2.
\end{equation}

Further, from Corollary \ref{cor} we get that the following event, for which we will write $A^{(0)}_t$,
has positive probability, say $p:=\Prob(A^{(0)}_t)>0$:
In the time interval $[t,t+1]$, there are no Poisson events neither on $e_0$ nor on $e_{K-1}$ and sufficiently many on
the edges in $E_{K-1}=\{e_1,\dots,e_{K-2}\}$ such that $\sum_{e\in E_{K-1}} \big|\Delta_{t+1}(e)\big|\leq \frac12$, irrespectively of the configuration $\bm{\Delta}_t$.

For all $j\in\Z$, let us write $A^{(j)}_t$ for the event $A^{(0)}_t$ shifted by $jK$ edges and 
$$B^{(j)}_t:=\bigg\{\frac{1}{K}\,\sum_{v=jK}^{(j+1)K-1} \big|\Delta_t(e_v)\big|\geq\frac{\epsilon}{2}\bigg\}.$$
From stationarity and \eqref{posprob}, it follows that $\Prob(B^{(j)}_t)=\Prob(B^{(0)}_t)\geq\frac\epsilon2$.
Due to the memoryless property of the Poisson processes and the fact that they are independent from $\boldsymbol{\Delta}_0$, or rather
$\bm{\eta}_0$, for each $j\in\Z$ the events $A^{(j)}_t$ (depending on the Poisson events in the time interval $(t,t+1]$ only) and $B^{(j)}_t$
(depending on the start values and dynamics up to time $t$) are independent. Consequently, $A^{(j)}_t\cap B^{(j)}_t$ has probability at least
$\frac{p\epsilon}{2}$. Since the Poisson processes are time homogeneous and the lower bound on $\Prob(B^{(j)}_t)$ is uniform in $t$, the
same actually holds for all $t\geq 0$.

To conclude, we gather a few simple observations: First, given the event $A^{(j)}_t$, it holds that
$$\sum_{v=jK}^{(j+1)K-1} \big|\Delta_{t+1}(e_v)\big|\leq 2+\sum_{v=jK+1}^{(j+1)K-2} \big|\Delta_{t+1}(e_v)\big|\leq \frac52,$$
as $\big|\Delta_{t+1}(e_{jK})\big|$ and $\big|\Delta_{t+1}(e_{(j+1)K-1})\big|$ are trivially bounded by $1$.
Second, given $B^{(j)}_t$, we have a reverse inequality for the time point $t$; more precisely, by our choice of $K$:
$$\sum_{v=jK}^{(j+1)K-1} \big|\Delta_{t}(e_v)\big|\geq K\cdot\frac\epsilon2 \geq 3.$$
In other words, given $A^{(j)}_t\cap B^{(j)}_t$, the sum $\sum_{v=jK}^{(j+1)K-1} \big|\Delta_{t+s}(e_v)\big|$ decreases by at least
$\frac12$ as $s$ increases from $0$ to $1$.

Let $S$ denote a section between two blocks (indexed by $j$ and $k$) such that $A^{(j)}_t\cap B^{(j)}_t$ and $A^{(k)}_t\cap B^{(k)}_t$ hold,
but not for any block in $S$. Now it is crucial to notice that the sum $\sum_{e\in S}\big|\Delta_{t}(e)\big|$ is non-increasing until time $t+1$:
Let $\underline{e}$ and $\overline{e}$ be the two edges of $P^{(j)}_K$ and $P^{(k)}_K$ respectively sharing a vertex
with an edge in $S$. Since there are no Poisson events on $\underline{e}$ and $\overline{e}$ during $(t,t+1]$, no Poisson event outside of
$S$ can change the $\Delta$-values inside $S$ during this time period. According to \eqref{ineq}, events inside $S$ can only decrease the
sum and the claimed monotonicity follows.
The fact that Poisson events on the marginal edges in $S$ might cause $\big|\Delta_{t+1}(\underline{e})\big|>\big|\Delta_{t}(\underline{e})\big|$
or $\big|\Delta_{t+1}(\overline{e})\big|>\big|\Delta_{t}(\overline{e})\big|$ doesn't have to bother us, since we estimated the values on these
edges with the utterly crude upper bound $1$ anyway.

Finally, applying Lemma \ref{ergodic} one last time, taking $T^K$ instead of $T$ and $f(\boldsymbol{Y})=\mathbbm{1}_{A^{(0)}_t\cap B^{(0)}_t}$, we get
\begin{equation}\label{indic}
\lim_{i,j\to\infty} \frac{1}{i+j}\,\sum_{k=-i}^{j-1} \mathbbm{1}_{A^{(k)}_t\cap B^{(k)}_t}= \Prob\big(A^{(0)}_t\cap B^{(0)}_t\big)\geq \frac{p\epsilon}{2} \quad\text{a.s.}
\end{equation}
Choosing instead $f(\boldsymbol{Y})=\frac1K \sum_{v=0}^{K-1} \big|\Delta_{t+s}(e_v)\big|$ for $s\in\{0,1\}$, gives
$$\lim_{i,j\to\infty} \frac{1}{(i+j)K}\,\sum_{v=-iK}^{jK-1} \big|\Delta_{t+s}(e_v)\big|= \E\big|\Delta_{t+s}(e_0)\big|\quad\text{a.s.}$$
From the above, in particular \eqref{indic}, we can deduce the following inequality:
\begin{align*}
\E\big|\Delta_{t}(e_0)\big|-\E\big|\Delta_{t+1}(e_0)\big|
&=\lim_{i,j\to\infty} \frac{1}{(i+j)K}\,\sum_{v=-iK}^{jK-1}\Big[ \big|\Delta_{t}(e_v)\big|-\big|\Delta_{t+1}(e_v)\big|\Big]\\
&\geq\lim_{i,j\to\infty} \frac{1}{(i+j)K}\,\sum_{k=-i}^{j-1} \frac12\cdot\mathbbm{1}_{A^{(k)}_t\cap B^{(k)}_t}\\
&\geq\frac{p\epsilon}{4K},
\end{align*}
where the equality and second inequality hold almost surely. Since $p$ depends on $K$ only, this bound is uniform in $t$; we arrive at the
contradiction sketched above and have thus ruled out the initial assumption. 
\end{nproof}


\begin{nproof}{of Theorem \ref{main}}
This follows from Proposition \ref{notstrong} and Proposition \ref{weak}. 
\end{nproof}

Next, we observe that Proposition \ref{weak}, together with the symmetries of the uniform compass model, renders convergence of individual
opinions impossible:

\begin{corollary}\label{nolim}
Consider the uniform compass model on $\Z$. For any fixed vertex $v\in\Z$, 
$$\Prob\Big(\lim_{t\to\infty}\eta_t(v)\text{ exists}\Big)=0.$$
\end{corollary}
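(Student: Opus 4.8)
The plan is to derive a contradiction from the assumption that $\Prob\big(\lim_{t\to\infty}\eta_t(v)\text{ exists}\big)>0$ for some fixed $v\in\Z$. First I would observe that by ergodicity (Lemma \ref{ergodic}) the event $C_v=\{\lim_{t\to\infty}\eta_t(v)\text{ exists}\}$, although not literally translation invariant on its own, has a probability that does not depend on $v$; indeed, the joint law of $\big(\eta_t(v)\big)_{t\ge0}$ is shift-covariant, so $\Prob(C_v)=\Prob(C_0)=:q$ for all $v$, and we are assuming $q>0$. Moreover, I would want to leverage Proposition \ref{weak}: since $\E\big|\Delta_t(e)\big|\to0$, along a suitable subsequence $t_j\to\infty$ we have $\Delta_{t_j}(e)\to0$ almost surely for each fixed edge $e$, and by a diagonal argument (as in the proof of Proposition \ref{notstrong}) we can pass to a common subsequence $(t_j)$ with $\Delta_{t_j}(e)\to0$ a.s.\ simultaneously for all $e\in E$.

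The key step is then to combine these two facts. On the event $C_v\cap C_w$ for two neighbours $\langle v,w\rangle$, the limits $L_v=\lim_t\eta_t(v)$ and $L_w=\lim_t\eta_t(w)$ both exist, and along the subsequence $t_j$ we have $d(\eta_{t_j}(v),\eta_{t_j}(w))=|\Delta_{t_j}(\langle v,w\rangle)|\to0$, forcing $L_v=L_w$ a.s.\ on that event. Iterating along $\Z$, on the event $\bigcap_{v\in\Z}C_v$ all the limits $L_v$ coincide with a single random value $L$, and $\lim_{t\to\infty}d(\eta_t(v),L)=0$ for every $v$ — i.e.\ strong consensus holds on this event. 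The remaining task is to show $\Prob\big(\bigcap_{v\in\Z}C_v\big)>0$, which would then contradict Proposition \ref{notstrong} (no strong consensus, not even in probability, and in particular not with positive probability on a translation-invariant event). Here I would invoke ergodicity once more: the event $\bigcap_{v}C_v$ is genuinely shift invariant, hence has probability $0$ or $1$; if it had probability $1$ we would already have a contradiction with Proposition \ref{notstrong}, so it must have probability $0$. But a further ergodic/tail argument is needed to descend from ``$\Prob(\bigcap_v C_v)\in\{0,1\}$'' plus ``$\Prob(C_v)=q>0$'' to an outright contradiction — the point being that the $C_v$ are not independent, so $\Prob(C_v)>0$ for each $v$ does not by itself force $\Prob(\bigcap_v C_v)>0$.

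To close that gap cleanly, I would instead argue as follows: fix the diagonal subsequence $(t_j)$ from Proposition \ref{weak} along which $\Delta_{t_j}(e)\to0$ a.s.\ for all $e$. On $C_v$, the full limit $\lim_t\eta_t(v)$ exists, so in particular $\lim_j\eta_{t_j}(v)$ exists and equals it. Now define $D_v=\{\lim_j\eta_{t_j}(v)\text{ exists}\}\supseteq C_v$, so $\Prob(D_v)\ge q>0$, and note that on $D_v$ together with $\Delta_{t_j}(\langle v,v+1\rangle)\to0$ we get that $\lim_j\eta_{t_j}(v+1)$ also exists, i.e.\ $D_v\subseteq D_{v+1}$ up to a null set, and symmetrically $D_{v+1}\subseteq D_v$; hence all $D_v$ agree a.s., call the common event $D$, with $\Prob(D)\ge q>0$, and on $D$ the limits $\lim_j\eta_{t_j}(v)$ all coincide (same collapsing argument). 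Since $D$ is translation invariant, $\Prob(D)=1$; but then $\eta_{t_j}(v)\to L$ a.s.\ for all $v$ along this deterministic subsequence, which is a form of strong consensus along $(t_j)$ and contradicts Proposition \ref{notstrong} (its proof already handles convergence along subsequences). Therefore $q=0$, proving the corollary.

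I expect the main obstacle to be precisely the passage from ``each $C_v$ has positive probability'' to ``the intersection event is handled by ergodicity'': one must be careful that the relevant collapsing event is genuinely shift invariant (so that the $0/1$ law applies) and that it contains $\bigcap_v C_v$ up to null sets, all while only ever working along the single good subsequence $(t_j)$ supplied by Proposition \ref{weak}. The $\mathcal{L}^1$-convergence of $\E|\Delta_t(e)|$ gives only subsequential a.s.\ convergence of the $\Delta$'s, so every statement about the $\eta$'s must be threaded through that subsequence, and one should double-check that Proposition \ref{notstrong}, whose proof explicitly allows subsequences via the diagonal argument, indeed rules out strong consensus along any deterministic subsequence.
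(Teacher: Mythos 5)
Your final argument (the one built on the events $D_v$ and the deterministic subsequence $(t_j)$) is correct, but it takes a genuinely different route from the paper. You use Proposition \ref{weak} to extract a deterministic sequence $t_j\to\infty$ along which $\Delta_{t_j}(e)\to 0$ a.s.\ simultaneously for all edges (translation invariance makes this easy: pick $t_j$ with $\E\big|\Delta_{t_j}(e_0)\big|\le 2^{-j}$ and apply Borel--Cantelli edge by edge), show that existence of $\lim_j\eta_{t_j}(v)$ propagates from one vertex to all of $\Z$ with all limits collapsing to a single $L$, upgrade the resulting a.s.-shift-invariant event to probability one by ergodicity, and land on almost sure strong consensus along $(t_j)$ --- which is excluded by the rotational-symmetry-plus-ergodicity argument \emph{inside} the proof of Proposition \ref{notstrong} (you rightly note you need the proof, which explicitly treats deterministic subsequences, rather than its literal statement about convergence in probability of the full family). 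The paper instead contradicts Proposition \ref{weak} quantitatively: conditionally on convergence at a vertex, rotational symmetry places the limit in $(-\tfrac12,0]$ and in $(\tfrac12,1]$ each with conditional probability $\tfrac14$; by ergodicity both types of vertices occur with positive spatial density, so some finite window $\{0,\dots,D\}$ contains, with probability at least $\tfrac12$, two vertices whose limits are at distance at least $\tfrac12$ on the circle, forcing $\sum_{v=0}^{D-1}\big|\Delta_t(e_v)\big|\ge\tfrac12$ for all large $t$ and hence $\E\big|\Delta_t(e)\big|$ bounded away from $0$. Your route buys a clean reduction to an already-established dichotomy and avoids the density/window bookkeeping; the paper's route stays at the level of expectations and never has to re-open the proof of Proposition \ref{notstrong}. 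Your first two paragraphs (the $\bigcap_v C_v$ attempt) do contain the gap you yourself diagnose --- positive probability of each $C_v$ does not yield positive probability of the intersection --- but since you discard that attempt in favour of the $D_v$ argument, the proof as finally given stands.
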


\begin{proof}
Let us write $A:=\{\lim_{t\to\infty}\eta_t(v)\text{ exists}\}$ and assume $\Prob(A)=:p>0$ for contradiction. From the rotational symmetry in $\mathcal{S}$,
it follows that
$$\Prob\Big[\lim_{t\to\infty}\eta_t(v)\in (-\tfrac12,0]\,\Big|\,A\Big]=\Prob\Big[\lim_{t\to\infty}\eta_t(v)\in (\tfrac12,1]\,\Big|\,A\Big]=\frac14.$$

By ergodicity, cf.\ Lemma \ref{ergodic}, the density of nodes at which the opinion converges to a value in $(-\tfrac12,0]$ and $(\tfrac12,1]$ respectively, therefore a.s.\ equals $\frac{p}{4}$.
Hence, for big enough $D\in \N$, with probability at least $\frac12$ there exist two nodes $u,w\in\{0,\dots,D\}$ such that both
$\lim_{t\to\infty}\eta_t(u)$ and $\lim_{t\to\infty}\eta_t(w)$ exist and the former lies in $(-\tfrac12,0]$, the latter in $(\tfrac12,1]$. This, however, forces
$$\sum_{v=0}^{D-1} \big|\Delta_{t}(e_v)\big|\geq \frac12,\quad\text{for all }t \text{ large enough,}$$ contradicting Proposition \ref{weak}.
\end{proof}

\section{Invariant measures}\label{invariant}

In this section, we finally prove Theorem \ref{thm-invariant}.  
Trivially, constant profiles, i.e.\ $\eta(v)=s$ for all $v\in\Z$ and some $s\in(-1,1]$, are invariant under the dynamics of the compass model.
We prove now that these are the only  extremal invariant distributions.

%
%
%

To this end, we first establish that for invariant measures, the edge differences on neighboring edges must have the same sign a.s.\ at all times. 

\begin{lemma}\label{samesign}
Consider an invariant measure $\nu$ for the compass model on $\Z$. Given that we start with $\bm{\eta}_0\sim \nu$ as initial configuration,
 it is true that at any time $t\geq0$, 
\begin{equation}\label{samesigneq}\Delta_t(e_v)\cdot\Delta_t(e_{v+1})\geq 0\quad\text{a.s.\ for all }v\in\Z.\end{equation}
\end{lemma}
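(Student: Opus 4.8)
The plan is to use invariance of $\nu$ in its infinitesimal form: recall that $\nu$ is invariant if and only if $\int\mathcal{L}f\,d\nu=0$ for every bounded test function $f$ depending on finitely many coordinates. Since $\nu$ is invariant we have $\bm{\eta}_t\sim\nu$ for all $t\ge0$, and as $\bm{\Delta}_t$ is a fixed measurable function of $\bm{\eta}_t$, it suffices to establish \eqref{samesigneq} at time $t=0$, i.e.\ $\Delta_0(e_v)\cdot\Delta_0(e_{v+1})\ge0$ $\nu$-a.s.\ for every $v\in\Z$.

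Fix $w\in\Z$ and apply the invariance identity to $f_w(\eta):=d\big(\eta(w),\eta(w+1)\big)=\big|\Delta(e_w)\big|$, which is continuous and bounded and depends on two coordinates only. Only Poisson events on $e_{w-1},e_w,e_{w+1}$ change $f_w$, so by the edge-difference update rule \eqref{gendeltaupdate}, $\mathcal{L}f_w(\eta)$ splits into three contributions. The event on $e_w$ multiplies $d(\eta(w),\eta(w+1))$ by $1-2\mu$, regardless of the geodesic chosen (and also in the diametrically opposed case), contributing exactly $-2\mu\,|\Delta(e_w)|$. The event on $e_{w-1}$ replaces $\Delta(e_w)$ by $\Delta(e_w)+\mu\,\Delta(e_{w-1})\pmod{\mathcal{S}}$ (resp.\ by a $\tfrac12$--$\tfrac12$ mixture of two such values when $|\Delta(e_{w-1})|=1$), so by the triangle inequality for $d$ its contribution is at most $\mu\,|\Delta(e_{w-1})|$; likewise the event on $e_{w+1}$ contributes at most $\mu\,|\Delta(e_{w+1})|$. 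Altogether, pointwise,
\begin{equation*}
\mathcal{L}f_w(\eta)\ \le\ \mu\big(|\Delta(e_{w-1})|+|\Delta(e_{w+1})|\big)-2\mu\,|\Delta(e_w)|.
\end{equation*}

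Integrating against $\nu$ and using $\int\mathcal{L}f_w\,d\nu=0$ gives $c_w\le\tfrac12\big(c_{w-1}+c_{w+1}\big)$ for all $w$, where $c_w:=\mathbb{E}_\nu|\Delta_0(e_w)|\in[0,1]$. A sequence on $\Z$ which is bounded and satisfies this convexity inequality must be constant: its increments $c_{w+1}-c_w$ are non-decreasing, so a single non-zero increment would force $c_w$ to tend to $+\infty$ as $w\to+\infty$ or $w\to-\infty$, contradicting $c_w\in[0,1]$. Hence $c_w\equiv c$, and the displayed inequality holds with equality in $\nu$-expectation. Since its integrand is everywhere dominated by the right-hand side, equality must in fact hold $\nu$-a.s.; in particular the bound on the $e_{w-1}$-contribution is attained a.s. Identifying the equality case of the triangle inequality $d(a+b,0)\le d(a,0)+d(b,0)$ on $S^1$ then shows that $\Delta_0(e_{w-1})$ and $\Delta_0(e_w)$ point the same way around the circle (the degenerate possibilities, e.g.\ $\Delta_0(e_w)=0$, or an atom of $\nu$ with $|\Delta_0(e_{w-1})|=1$, are covered too), i.e.\ $\Delta_0(e_{w-1})\cdot\Delta_0(e_w)\ge0$ $\nu$-a.s. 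Running the same argument with $e_{w+1}$ in place of $e_{w-1}$ and letting $w$ range over $\Z$ yields \eqref{samesigneq} at time $0$, hence by the reduction above at every time $t\ge0$.

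I expect the main obstacle to be the modular bookkeeping around the metric $d$: verifying that the pointwise upper bound on $\mathcal{L}f_w$ persists in the diametrically opposed case, where the update $A^{(1)}_{e}/A^{(2)}_{e}$ is genuinely randomized, and then extracting the precise sign statement from ``equality $\nu$-a.s.'' by pinning down exactly which pairs $(\Delta_0(e_{w-1}),\Delta_0(e_w))$ realise equality in the triangle inequality on the circle. The remaining ingredients — the characterisation $\int\mathcal{L}f\,d\nu=0$ of invariant measures and the observation that bounded (discretely) convex sequences on $\Z$ are constant — are routine.
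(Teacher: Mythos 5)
Your proof is correct and follows essentially the same route as the paper's: both derive the discrete convexity inequality $2\,\E|\Delta_0(e_w)|\le\E|\Delta_0(e_{w-1})|+\E|\Delta_0(e_{w+1})|$ from invariance, conclude that $w\mapsto\E|\Delta_0(e_w)|$ is constant by boundedness, and then extract the sign condition from the a.s.\ equality case of the triangle inequality on the circle. The only cosmetic difference is that you invoke the generator identity $\int\mathcal Lf\,d\nu=0$ directly, whereas the paper carries out the equivalent small-time expansion by hand via explicit Poisson probabilities on the five-edge window.
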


\begin{proof}
Let us start by calculating the probabilities for infinitesimal changes of the difference on a given edge, say $e_0$.
To this end, we consider the section $P=\{e_{-2},e_{-1},e_0,e_1,e_2\}$ -- as depicted in Figure \ref{pathof5} -- and write $N_e(t)$ for the Poisson
process associated with edge $e\in E$. Let $A_\emptyset(t)$ denote the event that no updates occur
neither on $e_0$, nor $e_{-1}$, nor $e_1$ until time $t$, i.e.\
\[A_\emptyset(t):=\big\{N_{e_{-1}}(t)=N_{e_0}(t)=N_{e_1}(t)=0\big\}.\]
For $e\in P$, let $A_{e}(t):=\big\{\sum_{e'\in P} N_{e'}(t)=1=N_{e}(t)\big\}$ be the event that until $t$, there is exactly one Poisson event on
$P$, occurring on edge $e$. Finally, let $A_{\geq 2}(t)$ denote the event that there are at least 2 Poisson events on $P$ until time $t$.
    \begin{figure}[htb]
    	\centering
    	\includegraphics[scale=1]{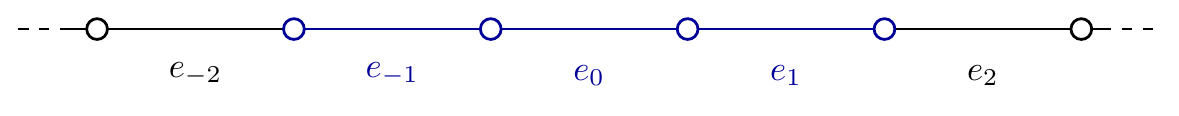}
    	\caption{To understand the infinitesimal evolution of the edge difference $\Delta_t(e_0)$, essentially only updates on $e_0$ and its
    		neighboring edges matter.\label{pathof5}}
    \end{figure}

\noindent
For a Poisson process $\big(N(t)\big)_{t\geq0}$ of unit rate, starting with $N(0)=0$, it holds
\begin{equation}\label{Poi}\Prob\big(N(t)=n\big)=\frac{t^n}{n!}\,\mathrm{e}^{-t},\quad \text{for all }n\in\N_0.\end{equation}
Simple calculations -- based on \eqref{Poi} and the independence of Poisson processes associated with different edges --
    yield
    \begin{equation}\label{calc}
    \begin{split}
     &\Prob\big(A_\emptyset(\epsilon)\big)=\mathrm{e}^{-3\epsilon}=1-3\epsilon+O(\epsilon^2),\\
     &\Prob\big(A_{e_0}(\epsilon)\big)=\Prob\big(A_{e_{-1}}(\epsilon)\big)=\Prob\big(A_{e_1}(\epsilon)\big)
     =\epsilon\cdot\mathrm{e}^{-5\epsilon}=\epsilon+O(\epsilon^2)\quad\text{and}\\
     &\Prob\big(A_{\geq2}(\epsilon)\big)=O(\epsilon^2).
    \end{split}
    \end{equation}
    Consequently, we find (cf.\ Figure \ref{evolution})
    \begin{equation*}
    \Delta_\epsilon(e_0)=\begin{cases}
    \Delta_0(e_0)&\text{with probability }1-3\epsilon+O(\epsilon^2),\\
    (1-2\mu)\,\Delta_0(e_0)&\text{w.p.\ }\epsilon+O(\epsilon^2),\\
    \Delta_0(e_0)+\mu\cdot\Delta_0(e_{-1})\pmod {\mathcal{S}}&\text{w.p.\ }\epsilon+O(\epsilon^2),\\
    \Delta_0(e_0)+\mu\cdot\Delta_0(e_1)\pmod {\mathcal{S}}&\text{w.p.\ }\epsilon+O(\epsilon^2)\quad\text{and}\\
    Z&\text{w.p.\ }O(\epsilon^2),
    \end{cases}
    \end{equation*}
    where $Z$ is some $(-1,1]$-valued random variable. 
    As the distributional invariance $\mathcal{L}\big(\bm{\eta}_0\big)=\nu =\mathcal{L}\big(\bm{\eta}_t\big)$ 
    directly implies $\E\big|\Delta_0(e_0)\big|=\E\big|\Delta_t(e_0)\big|$ 
    for all $t>0$, we can conclude (looking at time $t=\epsilon$ and using the independence of $\bm{\eta}_0$ from 
    the Poisson processes) that
    \[2\,(1+\mu)\,\E\big|\Delta_0(e_0)\big|\leq\E\big|\Delta_0(e_0)+\mu\,\Delta_0(e_{-1})\big|+\E\big|\Delta_0(e_0)+\mu\,\Delta_0(e_1)\big|
    +O(\epsilon),\]
    where the inequality comes from the potentially involved modulo calculation.
    Letting $\epsilon$ tend to $0$ and using the triangle inequality we get
    \begin{equation}\label{discrep}
    2\,\E\big|\Delta_0(e_0)\big|\leq\E\big|\Delta_0(e_{-1})\big|+\E\big|\Delta_0(e_1)\big|.
    \end{equation}
    Since these conclusions similarly apply to any other edge, in place of $e_0$, the convexity condition \eqref{discrep} must
    be an equality, as $\E\big|\Delta_0(e_v)\big|\in[0,1]$, for all $v\in\Z$, would be violated otherwise. Consequently, for invariant $\nu$ it holds
    \begin{equation}\label{discrep2}
    2\,\E\big|\Delta_0(e_0)\big|=\E\big|\Delta_0(e_{-1})\big|+\E\big|\Delta_0(e_1)\big|.
    \end{equation} 
    This in turn implies that $v\mapsto \E\big|\Delta_0(e_v)\big|$ is a linear function and, due to boundedness, it must be constant.
    To get equality in \eqref{discrep} requires
    \begin{align*}
    \E\big|\Delta_0(e_0)+\mu\,\Delta_0(e_{-1})\big|&=\E\big|\Delta_0(e_0)\big|+\mu\,\E\big|\Delta_0(e_{-1})\big|\quad\text{and}\\
    \E\big|\Delta_0(e_0)+\mu\,\Delta_0(e_1)\big|&=\E\big|\Delta_0(e_0)\big|+\mu\,\E\big|\Delta_0(e_1)\big|,
    \end{align*}
    which proves the claim for $v\in\{-1,0\}$ and $t=0$. To arrive at the full claim, observe once more that in the above argument we can
    simply replace $e_0$ by any other edge $e_v$ and that for fixed $v\in \Z$, due to the invariance of $\nu$, \eqref{samesigneq} either
    holds for all times $t\geq0$ or none.
\end{proof}

\vspace*{1em}
\begin{nproof}{of Theorem \ref{thm-invariant}}
	We proceed indirectly by assuming that there is an invariant law $\mathcal{L}\big(\bm{\eta}_0\big)$ that is non-constant, which means that there is no
	$\mathcal{S}$-valued random variable $L$, such that $\eta_0(v)=L$ a.s.\ for all $v\in\Z$. We shall prove that, under this assumption, with positive probability
		\begin{equation}\label{toshow}
	\Delta_t(e_v)\cdot\Delta_t(e_{v+1})<0\quad\text{for some }(v,t)\in\Z\times[0,\infty),
	\end{equation}
	thereby constructing a contradiction to Lemma \ref{samesign}. 

	Given an invariant distribution for $\bm{\eta}_0$, from \eqref{discrep2} we learn that there exists a constant $c\in[0,1]$, such that $\E\big|\Delta_0(e_v)\big|=c$ for all $v\in\Z$,
	and the assumption that $\bm{\eta}_0$ is not almost surely constant forces $c>0$. Using $0\leq\big|\Delta_0(e_v)\big|\leq1$, we can further
	conclude that
	\begin{equation}\label{pos}\Prob\big(\big|\Delta_0(e_v)\big|\geq \tfrac{c}{2}\big)\geq \tfrac{c}{2}\quad\text{for all }v\in\Z.\end{equation}
	Let $K:=\big\lceil\tfrac2c\big\rceil\,\big(\big\lceil\tfrac4c\big\rceil+3\big)+1$ and consider the section $P_K=(V_K,E_K)\subseteq\Z$,
	where $V_K=\{1,\dots,K\}$ and $E_K=\{e_1,\dots, e_{K-1}\}$. Let 
	\[X:=\Big|\big\{e\in E_K;\; \big|\Delta_0(e)\big|\geq\tfrac{c}{2}\big\}\Big|\] be the number of edges in $E_K$ on which initially there is
	an edge difference of at least $\tfrac{c}{2}$. From \eqref{pos}, $\E X\geq\tfrac{c}{2}\,(K-1)\geq\big\lceil\tfrac4c\big\rceil+3$ follows.
	Consequently, $\Prob(A)\geq\Prob(X\geq \E X)>0$, where 
	\[A:=\Big\{X\geq \big\lceil\tfrac4c\big\rceil+3\Big\}.\]
	Conditioned on $A$, we distinguish two cases:
	
	\underline{Case 1:} With positive probability, there are (not necessarily neighboring) edges $e,e'\in E_K$, whose initial edge differences have opposite
	sign, i.e.\ \[\Delta_0(e)\cdot\Delta_0(e')<0.\]	
	Let us choose a pair of such edges with minimal distance, say $e=e_v$ and
	$e'=e_{v+k}$, for some $k\in\{1,\dots,K-2\}$. If $k=1$, we directly arrive at \eqref{toshow}. If $k>1$, however, our choice implies \[\Delta_0(e_{v+1})=\ldots=\Delta_0(e_{v+k-1})=0.\] Let $B$ be the event that the chronologically ordered
	sequence of Poisson events on the edge set $\{e_{v-1},e_v,\dots,e_{v+k},e_{v+k+1}\}$ during the time period $0\leq t\leq 1$ is
	given by $(e_{v+k},e_{v+k-1},\dots,e_{v+2})$. Since $B$ is independent of $A$ -- as the Poisson processes are independent of
	the starting configuration -- we get $\Prob(A\cap B)>0$. Given $A\cap B$, we arrive at $\Delta_1(e_v)\cdot\Delta_1(e_{v+1})<0$,
	which concludes the first case.
	
	\underline{Case 2:} A.s.\ either $\Delta_0(e)\geq0$ for all $e\in E_K$ or $\Delta_0(e)\leq0$ for all $e\in E_K$. By symmetry, we can
	w.l.o.g.\ assume the former. Given $A$, we are guaranteed at least $\big\lceil\tfrac4c\big\rceil+3$ edges $e\in E_K$
	with $\Delta_0(e)\geq\tfrac{c}{2}$. Let $e_v$ be the leftmost, $e_{v+k}$ the rightmost of these edges and observe that
	\[\sum_{j=1}^{k-1}\Delta_0(e_{v+j})\geq\tfrac{c}{2}\,\Big(\big\lceil\tfrac4c\big\rceil+1\Big)\geq2+\tfrac{c}{2}.\]
	Imagine having no updates on neither $e_{v+1}$ nor $e_{v+k-1}$ and plenty on all edges in between these two until time $t=1$,
	such that $\sum_{j=2}^{k-2}\big|\Delta_1(e_{v+j})\big|\leq \tfrac{c}{3}$, cf.\ Corollary \ref{cor}.
	Then the following scenario must occur: An update on $e_{v+j}$ for some $j\in\{2,\dots,k-2\}$ at time
    $t\in[0,1]$ makes at least one of the neighboring edge differences, i.e.\ $\Delta_t(e_{v+j-1})$ or $\Delta_t(e_{v+j+1})$, flip to
    negative sign. This comes from the fact that in the process of balancing the section $\{e_{v+2},\dots,e_{v+k-2}\}$ until time $t=1$
    (if no such sign flip occurs earlier) the amount of positive difference cumulated at either $e_{v+1}$ or $e_{v+k-1}$ must
    exceed $1$ and hence cause a sign flip there.
    
    If $\mu<\tfrac12$, the edge on which the Poisson event occurred that caused the first sign flip retains strictly positive edge difference
    and we arrive at \eqref{toshow}. If $\mu=\tfrac12$ we can argue as in the second part of case 1.

    In conclusion, we showed that if there was an invariant distribution $\nu$ on $\Z^\mathcal{S}$, attributing positive probability
    to non-constant profiles, \eqref{toshow} has to occur with positive probability, which in turn contradicts the invariance in view of Lemma
    \ref{samesign}. This concludes the proof.	
\end{nproof}

\section{Further research} \label{future}
We have established that the compass model with i.i.d.\ uniform initial configuration exhibits weak consensus
in mean (and thus also in probability) and no strong consensus (in any sense). It remains open whether or not there
is weak consensus also in the almost sure sense. While it is known that there is almost sure weak consensus in the standard
Deffuant model with trivial confidence bound on $\Z^n$ (i.e.\ i.i.d.\ $\mathrm{unif}([0,1])$ initial opinions and $\theta=1$, 
cf.\ Thm.\ 3.1 in \cite{Deffuant}), it is still unknown, if in this setting there is any form of strong consensus for
$n\geq2$. With this in mind the question can be put more broadly, namely whether there is \emph{any} (meaningful) model
with almost sure weak consensus but no strong consensus. 

Further, one could ask to what extent our results carry over to natural extensions of the model analyzed here, for instance
the (uniform) compass model in higher dimensions (i.e.\ on $\Z^n, n\geq2$), the compass model with a non-trivial
``confidence bound'' $\theta$ (as in the Deffuant model), or different initial conditions.
Apart from Proposition \ref{notstrong}, which holds both in higher dimensions and with bounded confidence as
remarked earlier, most of our proofs are not robust to such fundamental changes of the model.	
Even though some of our techniques carry over to higher dimensions, we crucially exploit the fact that the maximal degree
in the graph $\Z$ is 2 in the proof of Corollary \ref{cor}.

Concerning the initial configuration, it seems that many of our arguments could be extended to more general distributions.
However, in view of Theorem \ref{thm-invariant} it is clear that some condition must be assumed.
Further, it should be mentioned that monotonicity plays an important role in the analysis of the Deffuant model. In the compass model, where
there is no apparent order in the state space, monotonicity becomes a subtle issue. 
There still is monotonicity in the form of Lemma \ref{monotone}; the edge difference process on a fixed edge itself, however,
is not a supermartingale.

\subsection*{Acknowledgements}
TH is grateful to Mia Deijfen for productive discussions. 
We thank Roland Bauerschmidt for correspondence concerning the dynamic XY model. We are grateful to an anonymous referee for helpful comments.


\vspace{0.5cm}
\makebox[0.8\textwidth][l]{
	\begin{minipage}[t]{\textwidth}
	{\sc \small Nina Gantert\\
	Fakult\"at f\"ur Mathematik,\\
	Technische Universit\"at M\"unchen,\\
	Boltzmannstr. 3,\\
	85748 M\"unchen, Germany.}\\
   gantert@ma.tum.de
	\end{minipage}}

\vspace{0.5cm}
\makebox[0.8\textwidth][l]{
	\begin{minipage}[t]{\textwidth}
	{\sc \small Markus Heydenreich\\
   Mathematisches Institut,\\
   Ludwig-Maximilians-Universit\"at,\\
   Theresienstr. 39,\\
   80333 M\"unchen, Germany.}\\
   m.heydenreich@lmu.de
	\end{minipage}}

\vspace{0.5cm}
\makebox[0.8\textwidth][l]{
	\begin{minipage}[t]{\textwidth}
	{\sc \small Timo Hirscher\\
   Matematiska institutionen,\\
   Stockholms Universitet,\\
   106 91 Stockholm, Sweden.}\\
   timo@math.su.se
	\end{minipage}}

\end{document}